\newtheorem{theorem}{Theorem}[section]
\newtheorem{lemma}[subsection]{Lemma}
\newtheorem{conjecture}[subsection]{Conjecture}
\theoremstyle{definition}
\newtheorem{remark}[subsection]{Remark}
\numberwithin{equation}{subsection}
\def\calO{\mathcal{O}}
\def\gothU{\mathfrak{U}}
\def\AAA{\mathbb{A}}
\def\CC{\mathbb{C}}
\def\FF{\mathbb{F}}
\def\QQ{\mathbb{Q}}
\def\ZZ{\mathbb{Z}}
\def\1{\mathbf{1}}
\def\i{\mathbf{i}}
\def\j{\mathbf{j}}
\def\k{\mathbf{k}}
\def\rmB{\mathrm{B}}
\def\rmG{\mathrm{G}}
\def\rmM{\mathrm{M}}
\def\rmT{\mathrm{T}}
\newcommand{\MATRIX}[4]{{\big(\begin{smallmatrix}
			#1&#2\\#3&#4
		\end{smallmatrix}\big)}}
\DeclareMathOperator{\GL}{GL}
\DeclareMathOperator{\Hom}{Hom}
\DeclareMathOperator{\Ind}{Ind}
\DeclareMathOperator{\Sym}{Sym}
\newcommand{\Dig}{\mathrm{Dig}}
\newcommand{\Iw}{\mathrm{Iw}}
\newcommand{\Mult}{\mathrm{Mult}}
\newcommand{\rank}{\mathrm{rank}}
\newcommand{\Matrix}[4]{{\big(\begin{smallmatrix}
			#1 & #2 \\ #3 & #4
		\end{smallmatrix}\big)}}
\newcommand{\unr}{\mathrm{unr}}
\newcommand{\proj}{\mathrm{proj}}
\newcommand{\midd}{\mathrm{mid}}
\newcommand{\NP}{\mathrm{NP}}
\begin{document}
	\title{An explicit computation of the Hecke operator and the ghost conjecture}
	\author{Nha Xuan Truong}
	\address{Department of Mathematics, University of Hawaii at Manoa, Honolulu, Hawaii 96822
	}
	\email{nxtruong@hawaii.edu}
	\maketitle
	\setcounter{tocdepth}{1}
	\section{Introduction}
	Let $p$ be a prime number, and $N$ be a positive integer co-prime to $p$. For an integer $k\geq3$, we use $S_k\big(\Gamma_1(pN)\big)$  to denote the space of modular cuspforms of weight $k$, level $\Gamma_1(pN)$. A modular form $f$ in $S_k\big(\Gamma_1(pN)\big)$ can be written as
	$$f(\tau) = \sum_{n = 1}^{\infty} a_n(f)q^n, \  q = e^{2 \pi i \tau}. $$
	
	For each prime $l$, one can define a \emph{Hecke operator} $T_l$ on $S_k\big(\Gamma_1(pN)\big)$. When $l=p$, the Hecke operator is often denoted by $U_p$ instead. For an \emph{eigenform} $f$, after normalization, one can deduce that $$U_p f = a_p f.$$
	
	The $p$-adic valuation of $a_p$ is called the \emph{slope of the eigenform}. The study of slopes plays an important role in understanding the geometry of the so-called eigencurve introduced by Coleman and Mazur using  $p$-adic interpolation of overconvergent eigenforms \cite{Coleman-Mazur}. The eigencurve has many application in $p$-adic number theory, for example Kisin's proof of Fontaine-Mazur conjecture for $\GL_2$ \cite{kisin}. 
	
	The first numerical data of the slopes was due to Gouv\^ea and Mazur in \cite{gouvea0}  using computer calculations. Buzzard and his co-authors computed in the case with small primes $p$ and small level \cite{buzzard-calegari},\cite{buzzard-kilford},\cite{jacobs}. Liu, Wan and Xiao found the slopes for the $U_p$-operator over the boundary of the weight space to be certain unions of arithmetic progressions \cite{liu-wan-xiao}.
	Recently, Bergdall and Pollack proposed the ghost conjecture which predicts the slopes over the entire the weight space. In \cite{bergdall-pollack2}\cite{bergdall-pollack3}, the authors defined a formal power series, called the \emph{ghost series}, and conjectured that the slope of $U_p$ action is the same as the slopes of the Newton polygon of the ghost series. 
	
	We consider a variant of the ghost conjecture for overconvergent forms on definite quaternion algebras.The result can than be translate to modular forms using Jacquet-Langlands correspondence. The upshot is that, this circumvented the difficulties posed by the geometry of a modular curve, traded with the arithmetic complication of a quarternion algebra, which turns out to be more accessible by our method. 
	
	One simple and nice computation in this direction is due to Jacobs in \cite{jacobs}. In his thesis, Jacobs studied the case when $p=3$ with a particular level \cite{jacobs} and computed the slopes of the $U_3$-operation. The method is later refined by Wan--Xiao--Zhang \cite{wan-xiao-zhang}.
	
	In this paper, we investigate the Hecke operator $U_5$ and show that the $n \times n$ upper left minors of the matrix have non zero corank and, interestingly, follow that unimodal pattern in the ghost conjecture \cite{bergdall-pollack2}\cite{bergdall-pollack3}. This seems to give the ghost series of Bergdall and Pollack some theoretic explanation. We expect that the slopes of $U_5$-action in this case can be computed using an appropriate variant of the \emph{ghost series}, defined in (\ref{ghost series}). Assume this result, we achieve an upper bound for the slopes that is similar to the Gou\^ea's $\frac{k-1}{p+1}$ conjecture.

	The result can be generalized in my ongoing project with Ruochuan Liu, Liang Xiao and Bin Zhao to prove the ghost conjecture under a certain mild technical hypothesis. \\
	
	\textbf{Acknowledgments}. 
	I would like to thank my advisor, Liang Xiao, for introducing me to the topic, suggesting many ideas, reviewing early drafts and for constant encouraging and supporting.  Also, when working on this paper, the author had Graduate Fellowship funded by Xiao's NSF CAREER Grant DMS--1752703.
	
	I also would like to thank John Bergdall and Robert Pollack for their great idea \cite{bergdall-pollack} and thank all the people contributing to the SAGE software, as lots of my argument rely on first testing using a heavy computer simulation.
	\section{Setup}
	\subsection{The quaternion algebra}
Our setup is a variant of \cite{wan-xiao-zhang}. In this paper, we investigate the case $p=5.$

Let $D$ be a quaternion algebra over  $\mathbb{Q}$. Explicitly, we set
$$ D := \QQ\langle \i,\j\rangle/ (\i^2+1,\j^2+1, \i\j +\j\i).$$

We know that $D$ ramifies exactly at 2 and $\infty$, and splits at all odd primes $p$. In particular, $$D \otimes_\QQ \QQ_5 \cong \rmM_2(\QQ_5)$$ .

We denote $\nu_5$ as the square root of $-1$ in $\QQ_5$ that is congruent to $2$ modulo $5$.
In $5$-adic expansion, $$\nu_5 = 2+ 5+ 2\cdot 5^2 + \cdots.$$

We then fix an isomorphism between $D \otimes \QQ_5$ and $\rmM_2(\QQ_5)$ so that
\[\1 \leftrightarrow \begin{pmatrix}
	1 &0 \\ 0 & 1
\end{pmatrix}, \quad \i \leftrightarrow
\begin{pmatrix}
	0 & 1\\ -1& 0
\end{pmatrix}, \quad
\j \leftrightarrow \begin{pmatrix}
	\nu_5 & 0\\ 0 & -\nu_5
\end{pmatrix}.
\]

The result in the thesis is independent of the above isomorphism and the choice of the square root $\nu_5$. We denote $\k = \i\j$, and its image under the isomorphism is
$$\k \leftrightarrow \begin{pmatrix}
	0 & -\nu_5 \\ -\nu_5 & 0
\end{pmatrix}.$$
We also know the unit group of $\calO_D $ consists of $24$ elements:
\[
\calO_D^\times = \big\{ \pm\! \1, \pm \i, \pm \j, \pm \k, \tfrac12(\pm \1 \pm \i \pm \j \pm \k)\; \big\}.
\]
\newpage
	Their image under the above isomorphism are:
	\begin{center}
		\begin{tabular}{|c|c|c|}
			\hline $\calO_D^\times$&$\rmM_2(\QQ_5)$&mod 5\\
			
			\hline $\pm \1$  & $\pm \begin{pmatrix}1 & 0 \\ 0 & 1 \end{pmatrix}$ 
			& $\pm \begin{pmatrix}1 & 0  \\ 0 & 1 \end{pmatrix} $\\ 
			
			\hline $\pm \i $&$ \pm \begin{pmatrix}0 & 1 \\ -1 & 0\end{pmatrix}$
			& $ \pm \begin{pmatrix}	0 & 1  \\4 & 0 \end{pmatrix} $\\
			
			\hline $\pm \j  $& $\pm \begin{pmatrix}\nu_5 & 0 \\ 0 & -\nu_5 \end{pmatrix} $
			& $\pm \begin{pmatrix} 2 & 0  \\ 0 & 3 \end{pmatrix} $\\
			
			\hline $\pm \k  $&$ \pm \begin{pmatrix}	0 & -\nu_5 \\ -\nu_5 & 0 \end{pmatrix} $
			&$ \pm \begin{pmatrix} 0 & 3  \\ 3 & 0 \end{pmatrix} $\\
			\hline $\pm \frac{1}{2} (\1+\i+\j+\k) $&$ \pm \frac{1}{2} \begin{pmatrix}	1+\nu_5 & 1- \nu_5 \\ -(1+\nu_5) & 1-\nu_5 \end{pmatrix} $
			&$ \pm \begin{pmatrix}4 & 2  \\ 1 & 2 \end{pmatrix}$\\
			\hline $\pm \frac{1}{2} (-\1+\i+\j+\k) $
			&$ \pm \frac{1}{2} \begin{pmatrix}
				-1+\nu_5 & 1- \nu_5 \\ -(1+\nu_5) & -(1+\nu_5) \end{pmatrix} $
			&$ \pm \begin{pmatrix} 3 & 2  \\ 1 & 1 \end{pmatrix} $\\
			
			\hline $\pm \frac{1}{2} (\1-\i+\j+\k)$&$ \pm \frac{1}{2} \begin{pmatrix}
				1+\nu_5 & -(1+ \nu_5) \\ 1-\nu_5 & 1-\nu_5 \end{pmatrix}$
			&$\pm \begin{pmatrix}4 & 1  \\ 2 & 2 \end{pmatrix}$\\
			
			\hline $\pm \frac{1}{2} (\1+\i-\j+\k)$ &$ \pm \frac{1}{2}\begin{pmatrix}
				1-\nu_5 & 1- \nu_5 \\ -(1+\nu_5) & 1+\nu_5 \end{pmatrix}$
			&$\pm \begin{pmatrix}2 & 2  \\ 1 & 4 \end{pmatrix} $\\

			\hline $\pm \frac{1}{2} (\1+\i+\j-\k) $&$\pm \frac{1}{2} \begin{pmatrix}
				1+\nu_5 & 1+\nu_5 \\ -1+\nu_5 & 1-\nu_5 \end{pmatrix}$&$\pm \begin{pmatrix}
				4 & 4  \\ 3 & 2 \end{pmatrix} $\\
			
			\hline
			$\pm \frac{1}{2} (-\1-\i+\j+\k) $&$ \pm \frac{1}{2} \begin{pmatrix}
				-1+\nu_5 & -(1+ \nu_5) \\ 1-\nu_5 & -(1+\nu_5) \end{pmatrix}
			$&$ \pm \begin{pmatrix}
				3 & 1  \\ 2 & 1 \end{pmatrix} $\\
			
			\hline 
			$\pm \frac{1}{2} (-\1+\i-\j+\k) $&$ \pm \frac{1}{2} \begin{pmatrix}
				-(1+\nu_5) & 1- \nu_5 \\ -(1+\nu_5) & -1+\nu_5 \end{pmatrix}$&$ \pm \begin{pmatrix}
				1 & 2  \\ 1 & 3 \end{pmatrix} $\\
			
			\hline 
			$\pm \frac{1}{2} (-\1+\i+\j-\k) $&$ \pm \frac{1}{2} \begin{pmatrix}
				-1+\nu_5 & 1+ \nu_5 \\ -1+\nu_5 & -(1+\nu_5) \end{pmatrix}$&$ \pm \begin{pmatrix}
				3 & 4  \\ 3 & 1 \end{pmatrix} $\\
			
			\hline
			
		\end{tabular}
	\end{center}

	\subsection{Level structure} 
	We define the level structure using adeles. 
	The ring of finite adeles $\AAA_f$ is defined as 
	$$\AAA_f := \{ \left(x_\ell\right)_\ell \in \prod_{\ell \textrm{ prime}} \QQ_\ell|\textrm{ all but finitely many } x_\ell \in \ZZ_\ell  \}.$$
	
	We recall that $D$ splits at all odd prime  $\ell$, i.e $D \otimes_\QQ \QQ \cong\rmM_2(\QQ_\ell)$. We fix such an isomorphism for each $\ell$ and define 
	$$D_f = D\otimes_\QQ \AAA_f$$ 
	Then, 
	
	$$D_f^\times = (D \otimes_\QQ \AAA_f)^\times := \{ \left(x_\ell\right)_\ell \in (D \otimes \QQ_2)^\times  \prod_{\ell \textrm{ prime}, \ell \not = 2}\GL_2(\QQ_\ell)|\textrm{ all but finitely many } x_\ell \in \GL_2(\ZZ_\ell)\}.$$

	For $\ell = 2$, we use $D^\times(\ZZ_2)$ to denote the maximal compact subgroup of $  (D \otimes \QQ_2)^\times$.
	\newpage
	We consider the following open compact subgroup of $D^\times_f$:
	
	$$\widehat \Gamma_1(5)
	= D^\times (\ZZ_2) \times \prod_{\ell \neq 2, 5} \GL_2(\ZZ_\ell) \times \begin{pmatrix}
		\ZZ_5^\times & \ZZ_5\\ 5\ZZ_5&1+5\ZZ_5
	\end{pmatrix}$$
	and the maximal open compact subgroup
	$$\widehat \GL_2(\ZZ_5)
	= D^\times (\ZZ_2) \times \prod_{\ell \neq 2} \GL_2(\ZZ_\ell).$$
	
	We have the following useful lemma, which explains the choice of $\widehat \Gamma_1(5).$

	\begin{lemma}
		\label{gamma1}
		The following natural map is bijective.
		\begin{equation}
			\xymatrix@R=0pt{
				D^ \times \times  \widehat \Gamma_1(5) \ar[r] & D_f^\times\\
				(\delta, u) \ar@{|->}[r] & \delta u.}
		\end{equation}
	\end{lemma}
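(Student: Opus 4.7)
The plan is to establish bijectivity in two independent steps: first \emph{injectivity}, which reduces to showing $D^\times \cap \widehat \Gamma_1(5) = \{\1\}$; and second \emph{surjectivity}, which follows by combining the classical class-number-one property of the maximal order $\calO_D$ with a pigeonhole argument against the $24$-element table above.

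For injectivity, suppose $\delta_1 u_1 = \delta_2 u_2$ with $\delta_i \in D^\times$ and $u_i \in \widehat \Gamma_1(5)$. Then $\delta := \delta_2^{-1}\delta_1 = u_2 u_1^{-1}$ lies in $D^\times \cap \widehat \Gamma_1(5)$. Because $\widehat \Gamma_1(5)$ sits inside $D^\times(\ZZ_2) \times \prod_{\ell \neq 2} \GL_2(\ZZ_\ell)$, the element $\delta$ is a local unit at every finite place, so its reduced norm lies in $\ZZ^\times = \{\pm 1\}$. Definiteness of $D$ (ramification at $\infty$) forces this norm to be positive, hence $\mathrm{Nm}(\delta) = 1$ and $\delta \in \calO_D^\times$. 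Reading off the mod-$5$ column of the table, lying in $\MATRIX{\ZZ_5^\times}{\ZZ_5}{5\ZZ_5}{1+5\ZZ_5}$ is equivalent to having bottom-left entry $\equiv 0$ and bottom-right entry $\equiv 1 \pmod 5$, and a case check shows that only $+\1$ among the $24$ units satisfies both. Hence $\delta = \1$, and the map is injective.

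For surjectivity, I would first invoke the classical fact that the maximal order of the quaternion algebra ramified at $2$ and $\infty$ has class number one, i.e.\ $D_f^\times = D^\times \cdot \widehat \GL_2(\ZZ_5)$, where $\widehat \GL_2(\ZZ_5) = D^\times(\ZZ_2) \times \prod_{\ell \neq 2} \GL_2(\ZZ_\ell)$ as in the text. Since $\widehat \GL_2(\ZZ_5)$ and $\widehat \Gamma_1(5)$ agree at every place except $5$, it suffices to show that the natural map $\calO_D^\times \longrightarrow \GL_2(\ZZ_5) \big/ \MATRIX{\ZZ_5^\times}{\ZZ_5}{5\ZZ_5}{1+5\ZZ_5}$ is surjective. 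Both sides have cardinality $24$: the source by inspection, and the target by computing $|\GL_2(\FF_5)| / (4 \cdot 5) = 480 / 20 = 24$. The same mod-$5$ calculation used for injectivity shows that no two elements of $\calO_D^\times$ land in the same coset, so by counting the map is bijective, and surjectivity of the lemma follows.

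The main obstacle, conceptually, is the class-number-one input; all other steps are elementary bookkeeping that exploit the striking numerical coincidence $|\calO_D^\times| = 24 = [\widehat{\GL}_2(\ZZ_5) : \widehat \Gamma_1(5)]$ already visible from the table. This input is classical for the Hurwitz-type order at hand and can be established via Eichler's mass formula or by displaying $\calO_D$ as a (left) Euclidean order; for the purposes of this paper I would simply cite a standard reference rather than reprove it.
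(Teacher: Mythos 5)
Your proposal is correct and follows essentially the same route as the paper: both rest on the class-number-one decomposition $D_f^\times = D^\times\cdot\widehat\GL_2(\ZZ_5)$ (which the paper cites from Jacobs), the identification $D^\times\cap\widehat\GL_2(\ZZ_5)=\calO_D^\times$, and the counting coincidence $|\GL_2(\FF_5)|=24\cdot 20$ together with the mod-$5$ check that $\calO_D^\times\cap\Iw_{5,1}=\{\1\}$. Your injectivity/surjectivity packaging and the reduced-norm justification of $D^\times\cap\widehat\GL_2(\ZZ_5)=\calO_D^\times$ are just slightly more explicit versions of steps the paper asserts without detail.
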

	\begin{proof}
		We follows the same argument in \cite{wan-xiao-zhang}. 
		By Lemma 1.22 in \cite{jacobs}, 
		$$D_f^\times = D ^\times \cdot  \widehat \GL_2(\ZZ_5).$$ 
		
		Since 	$D^\times \cap \widehat \GL_2(\ZZ_5) = \calO_D^\times$, taking into account of the duplication, we have
		\[
		D_f^\times = D ^\times \times^{\calO_D^\times}  \widehat \GL_2(\ZZ_5).
		\]
		
		So it suffices to check that the image of $\calO_D^\times$ in $\GL_2(\FF_5)$ forms a full set of coset representatives of $\widehat \GL_2(\ZZ_5)/\widehat \Gamma_1(5) \cong \GL_2(\FF_5) / \begin{pmatrix}
			\FF_5^\times & \FF_5\\ 0&1
		\end{pmatrix}.$\\ 
		This further follows from the fact that $\GL_2(\FF_5)$ and$ \begin{pmatrix}
			\FF_5^\times & \FF_5\\ 0&1
		\end{pmatrix} \times \calO_D^\times$  both have $24\cdot20$ elements and  $ \begin{pmatrix}
			\FF_5^\times & \FF_5\\ 0&1
		\end{pmatrix} \cap \calO_D^\times = \{1\}$.
	\end{proof}
	\subsection{Overconvergent automorphic forms}
	In this section, we define the space of overconvergent automorphic forms for a definite quaternion algebra and describe the Hecke actions explicitly.
	
	Fix an integer $k \in \ZZ$, consider the right action of the Iwahori subgroup $\Iw_{5,1} = \begin{pmatrix}
		\ZZ_5^\times & \ZZ_5\\ 5\ZZ_5& 1+5\ZZ_5
	\end{pmatrix}$ of $\GL_2(\ZZ_5)$ on $\QQ_5 \big< z \big>$ given by
	\begin{equation}
		\label{action}
		\textrm{for } \gamma = \big(
		\begin{smallmatrix}
			a&b\\c&d
		\end{smallmatrix} \big) \in \Iw_{5,1} \textrm{ and }f(z) \in \QQ_5 \big< z \big>, \quad
		(f||_k\gamma)(z): = (cz+d)^{k-2} f\big( \frac{az+b}{cz+d}\big).
	\end{equation}
	
	We define the space of \emph{overconvergent automorphic forms of weight $k$ and level $ \widehat \Gamma_1(5)$} to be
	\[
	S_k^{D,\dagger}(\widehat \Gamma_1(5)): = \Big
	\{
	\varphi: D^\times \backslash D_f^\times \to \QQ_5\big< z \big> \;\Big|\;
	\varphi(gu) = \varphi(g)||_k u_5, \textrm{ for any } g \in D_f^\times, u \in \widehat \Gamma_1(5)
	\Big\},
	\]
	where $u_5$ is the $5$-component of $u$.

	Using Lemma \ref{gamma1}, we have the isomorphism: 
	\begin{equation}
		\label{isomorphism}
		\xymatrix@R=0pt{
			S_k^{D,\dagger}(\widehat \Gamma_1(5)) \ar[r]^-\cong & \QQ_5 \big< z \big>\\
			\varphi \ar@{|->}[r] & \varphi(1).
		}
	\end{equation} 
	\begin{remark} The isomorphism gives a simpler description of the space of overconvergent automophic form, $S_k^{D,\dagger}(\widehat \Gamma_1(5))$, which allows us to compute explicitly the matrix of the Hecke action.   
	\end{remark}
\subsection{$U_5$-operator}
The space $S_k^{D,\dagger}(\widehat \Gamma_1(5))$ carries actions of Hecke operators $U_5$, defined as follows.

We write  
\begin{equation}
	\Iw_{5,1}\big(\begin{smallmatrix}
		5&0\\0&1
	\end{smallmatrix}\big) \Iw_{5,1} = \coprod_{i=0}^{4}
	\Iw_{5,1} v_i, \quad \textrm{with } v_i = \big(\begin{smallmatrix}
		5&0\\5i &1
	\end{smallmatrix}\big).
\end{equation}
Then the action of the operator $U_5$ on $S_k^{D,\dagger}\big(\widehat \Gamma_1(5)\big)$ is defined to be
\begin{equation}
	\label{E:defn of U_p}
	U_5(\varphi) = \sum_{i =0}^{4} \varphi|_k v_i, \quad \textrm{with }(\varphi|_k v_i)(g): = \varphi(gv_i^{-1})||_k v_i.
\end{equation}

	\section{Explicit computation of the infinite matrix of $U_5$}
	
	In terms of the explicit description of the space of overconvergent automorphic forms \eqref{isomorphism}, the $U_5$-operators can be described by the following commutative diagram.
	\[
	\xymatrix@C=80pt{
		S_k^{D,\dagger}(\widehat \Gamma_1(5)) \ar[r]^{\varphi \mapsto \varphi(1)} \ar[d]_{\begin{tiny}\varphi \mapsto U_5(\varphi) \end{tiny}} &\QQ_5 \big< z \big>
		\ar[d]_{\begin{tiny}\gothU_5\end{tiny}}
		\\
		S_k^{D,\dagger}(\widehat \Gamma_1(5)) \ar[r]^{\varphi \mapsto \varphi(1)} 
		&\QQ_5 \big< z \big>.}
	\]
	
	\begin{lemma}
		\label{Upmatrices}
		The $\gothU_5$  is given by
		$\gothU_5 = ||_\kappa \delta_1 + ||_\kappa \delta_2 + ||_\kappa \delta_3+||_\kappa \delta_4 + ||_\kappa \delta_5 $, where
		\[
		\delta_1 =
		\j-\mathbf{2}, \ \delta_2= \tfrac12(\1+\i-3\j-3\k), \ \delta_3= \tfrac12(\1+3\i-3\j+\k),\ \delta_4= \tfrac12(\1-3\i-3\j-\k),\ \textrm{and }\delta_5= \tfrac12(\1-\i-3\j+3\k).
		\]
		Their images in $\GL_2(\QQ_5)$ are given by
		\[
		\begin{pmatrix}
			\ -2+\nu_5 & 0 \\ 0 & -(2+ \nu_5)
		\end{pmatrix}, \quad
		\frac{1}{2}\begin{pmatrix}
			1-3 \nu_5 & 1+ 3\nu_5 \\ -1+ 3 \nu_5 & 1+3\nu_5
		\end{pmatrix}, \quad 
		\quad
		\frac{1}{2}\begin{pmatrix}
			1-3 \nu_5 & 3-\nu_5 \\ -(3+ \nu_5) & 1+3\nu_5
		\end{pmatrix}, \quad  
		\]
		\[    
		\frac{1}{2}\begin{pmatrix}
			1-3 \nu_5 & -3+\nu_5 \\  3 +\nu_5 & 1+3\nu_5
		\end{pmatrix}, \quad
		\textrm{and} \quad	
		\frac{1}{2}\begin{pmatrix}
			1-3 \nu_5 & -(1+ 3\nu_5) \\ 1- 3 \nu_5 & 1+3\nu_5
		\end{pmatrix}.
		\]
	\end{lemma}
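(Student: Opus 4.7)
The plan is to unwind the definition of $U_5$ through the identification $\varphi \mapsto \varphi(1)$ from Lemma \ref{gamma1}. For each $i \in \{0,1,2,3,4\}$, I would view $v_i^{-1}$ as an element of $D_f^\times$ (placed at the $5$-component, identity elsewhere) and apply Lemma \ref{gamma1} to write $v_i^{-1} = \gamma_i u_i$ uniquely with $\gamma_i \in D^\times$ and $u_i \in \widehat\Gamma_1(5)$. Left $D^\times$-invariance together with the right quasi-invariance of $\varphi$ give $\varphi(v_i^{-1}) = \varphi(u_i) = \varphi(1) ||_k (u_i)_5$, and comparing $5$-components of the decomposition yields $(u_i)_5 = (\gamma_i^{-1})_5 \cdot v_i^{-1}$. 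The two slash actions then compose to $||_k (\gamma_i^{-1})_5$, so on setting $\delta_i := \gamma_i^{-1} \in D^\times$ one obtains $\gothU_5 = \sum_{i=0}^{4} ||_k (\delta_i)_5$.

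The second step is to identify the five $\delta_i$'s explicitly. By construction they must satisfy (a) $(\delta_i)_5 \cdot v_i^{-1} \in \Iw_{5,1}$ at the prime $5$, and (b) $(\delta_i)_\ell$ lies in the $\ell$-local factor of $\widehat\Gamma_1(5)$ at every $\ell \neq 5$. Matching determinants forces $\Nm(\delta_i) = 5$, and condition (b) is automatic once $\delta_i \in \calO_D$, since $5 \in \ZZ_\ell^\times$ for $\ell \neq 5$ and $\calO_D \otimes \ZZ_\ell$ is the maximal order there. Writing $(\delta_i)_5 = \big(\begin{smallmatrix} x & y \\ z & w \end{smallmatrix}\big)$ and $v_i^{-1} = \big(\begin{smallmatrix} 1/5 & 0 \\ -i & 1 \end{smallmatrix}\big)$, condition (a) unpacks into the congruences $x \equiv 0 \pmod 5$, $w \equiv 1 \pmod 5$, and $z \equiv 5i \pmod{25}$, of which only the last depends on $i$.

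The third step is to produce the five quaternions by searching $\calO_D$ for norm-$5$ elements. These are the integer solutions to $a^2+b^2+c^2+d^2 = 5$ (with $\{|a|,|b|,|c|,|d|\} = \{2,1,0,0\}$) and the half-integer combinations $\tfrac{1}{2}(a\1+b\i+c\j+d\k)$ with $a,b,c,d$ odd and $a^2+b^2+c^2+d^2 = 20$ (with $\{|a|,|b|,|c|,|d|\} = \{3,3,1,1\}$), so the search runs over a handful of $\calO_D^\times$-orbits. For each candidate I would compute the image in $\rmM_2(\QQ_5)$ under the fixed isomorphism (producing the matrices displayed in the lemma), confirm $\Nm = 5$ via the quaternion norm formula, and check the three congruences on $x,z,w$; the congruence on $z$ modulo $25$ requires $\nu_5 \equiv 7 \pmod{25}$, which follows from the given expansion $\nu_5 = 2 + 5 + \cdots$.

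The hard part will be the bookkeeping in this last step: one must verify not only that each listed $\delta_i$ satisfies (a) for some $i$, but also that the five resulting residues $z/5 \bmod 5$ exhaust $\{0,1,2,3,4\}$, so that the $\delta_i$'s are in bijection with the $v_i$'s. Once this is checked case by case, each $||_k \delta_i$ can be read directly off the explicit matrix and the claimed formula for $\gothU_5$ follows.
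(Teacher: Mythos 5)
Your first step—unwinding $U_5$ through the bijection of Lemma \ref{gamma1}, writing $v_i^{-1}=\gamma_iu_i$ and collapsing the two slash actions to $||_k(\gamma_i^{-1})_5$—is exactly the paper's argument, and your reduction of the problem to finding norm-$5$ elements of $\calO_D$ satisfying $x\equiv 0$, $w\equiv 1 \pmod 5$ and $z\equiv 5i\pmod{25}$ is a correct translation of the condition $(\delta_i)_5v_i^{-1}\in\Iw_{5,1}$ (the unit condition on the top-left entry of the product follows automatically from the determinant, as you implicitly use). Where you diverge is in how the five quaternions are actually pinned down. The paper does not enumerate the $24\cdot 6$ norm-$5$ elements of the Hurwitz-type order: it factors $\delta_j=\delta_j'(\1+2\j)$ with $\Nm(\1+2\j)=5$, observes that $\delta_j'$ is then forced to lie in $D^\times\cap\widehat\GL_2(\ZZ_5)=\calO_D^\times$, and reads off the five admissible units directly from the mod-$5$ table of the $24$ units (those with lower-right entry $\equiv 3$), using uniqueness from Lemma \ref{gamma1} to conclude. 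That trick buys a much shorter search (a mod-$5$ table lookup among $24$ elements, with no mod-$25$ arithmetic and no need for $\nu_5\equiv 7\pmod{25}$) at the cost of producing the auxiliary element $\1+2\j$ out of a hat; your version is more brute-force but self-contained, and your closing remark is slightly stronger than necessary—once Lemma \ref{gamma1} guarantees uniqueness of the decomposition, exhibiting one valid $\delta_i$ for each residue $i$ already finishes the proof, so you need not separately argue that the residues exhaust $\{0,\dots,4\}$ beyond checking the five candidates give distinct $i$'s. Either route yields the displayed matrices, so your proposal is correct.
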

	\begin{proof}
		We can compute $U_5$ explicitly
		\[
		U_5(\varphi)(1) = \sum_{j=1}^5 \varphi(v_j^{-1})||_\kappa v_j, \quad \textrm{for }v_j = \big(
		\begin{smallmatrix}
			5&0\\ 5 j &1
		\end{smallmatrix}
		\big)
		\]
		By Lemma \ref{gamma1}, we can write each $v_j^{-1}$ uniquely as $\delta_j^{-1} u_j$ for $\delta_j \in D^\times$ and $u_j \in \widehat \Gamma_1(5)$.
		Then
		\[
		\varphi(v_j^{-1})||_\kappa v_j =\varphi(\delta_j^{-1} u_j)||_\kappa v_j= \varphi(1)||_\kappa (u_{j,5}v_j) = \varphi(1)||_\kappa \delta_{j,5},
		\]
		where $u_{j,5}$ and $\delta_{j,5}$ denote the $5$-components of $u_j$ and $\delta_j$, respectively.
		
		We see that
		\[\delta_{j,5} = u_{j,5}v_{j,5} \in D^\times \cap \Iw_{5,1} v_j \subseteq D^\times\cap \Iw_{5,1}\big( \begin{smallmatrix}
			5 & 0 \\ 0 & 1
		\end{smallmatrix}\big) \Iw_{5,1} \subseteq D^\times \cap  U^* \big( \begin{smallmatrix}
			5 & 0 \\ 0 & 1
		\end{smallmatrix}\big),
		\]	
		where $U^*$ is a subgroup of $\GL_2(\ZZ_5)$ such that the lower right entry belongs to $1+ 5 \ZZ_5$	 
		If we put $\delta_j = \delta'_j (1 + 2 \j)$, then we have
		\begin{align}
			\nonumber
			\delta'_j &\in D^\times \cap  U^*\big(\begin{smallmatrix}
				5 & 0 \\ 0 & 1
			\end{smallmatrix}\big)(1+ 2\j)^{-1}  =  D^\times \cap U^* \big(\begin{smallmatrix}
				{1- 2 \nu_5} & 0 \\ 0 & \frac{(\1+2\nu_5)}{5}
			\end{smallmatrix}\big).
		\end{align}
		We know $D^\times \cap \widehat \GL_2(\ZZ_5) = \calO_D^\times$ and all $\delta'_j$ are distinct, so by taking modulo 5, and comparing to the list of $\calO_D^\times$, we have \\ $$\delta'_j \in \big \{ \j,-\frac{1}{2} (\1+\i+\j+\k),-\frac{1}{2} (\1-\i+\j+\k),-\frac{1}{2} (\1+\i+\j-\k),\frac{1}{2} (-\1+\i-\j+\k) \big \}.$$		
		It is then clear that all $\delta_j$'s are among the collections of the above right-multiplied by $\1+ 2\j$.  The rest of the lemma is straightforward.
	\end{proof}
	\section{ Computation}
	In this section we compute the matrix for the operator $U_5$ explicitly and explore its upper left $n \times n$ principal minor. 
	\begin{theorem}
		Let $(P_{i,j})_{i,j=0, 1, \cdots}$ denote the matrix for the operator $U_5$ on $\QQ_5 \langle z \rangle $, defined in \eqref{E:defn of U_p}, with respect to the power basis $1, z, z^2, \dots$, then
		\begin{itemize}
			\item 
			When $i=j$ \\$$ P_{ij}= \left(\frac{1+3\nu_5}{2}\right)^{k-i-2}\left(\frac{1-3\nu_5}{2}\right)^i\left(4\sum_{n=0}^i(-1)^{i-n}\binom{j}{n}\binom{k-j-2}{i-n} +(\nu_5-1)^{k-2}(\nu_5)^i \right);$$\\
			\item
			When  $i \neq j$ but $i \equiv j $mod 4 $$P_{i,j}=4\left(\frac{1+3\nu_5}{2}\right)^{k-i-2}\left(\frac{1-3\nu_5}{2}\right)^i \ \ \sum_{n=0}^{\min\{i,j\}}(-1)^{i-n}\binom{j}{n}\binom{k-j-2}{i-n};$$\\
			\item Otherwise $P_{ij} = 0.$
		\end{itemize} 
	\end{theorem}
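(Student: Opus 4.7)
The plan is to use Lemma~\ref{Upmatrices} to decompose $\gothU_5=\sum_{l=1}^{5}||_{k}\delta_l$, apply each summand to the basis monomial $z^j$, and read off the coefficient of $z^i$. The structural observation that drives the computation is a uniform shape shared by four of the five $\delta_l$. Writing $a=(1-3\nu_5)/2$ and $d=(1+3\nu_5)/2$, each of $\delta_2,\delta_3,\delta_4,\delta_5$ can be written as
\[
\begin{pmatrix} a & \epsilon d \\ -\epsilon^{-1}a & d \end{pmatrix}\qquad\textrm{with}\qquad \epsilon\in\{1,-1,\nu_5,-\nu_5\},
\]
so $\epsilon$ ranges through the group $\mu_4$ of fourth roots of unity inside $\ZZ_5$ (via $\nu_5^{2}=-1$), while $\delta_1=\mathrm{diag}(\nu_5-2,\,-(2+\nu_5))$ is the exceptional diagonal summand.

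For any $\delta$ of the uniform shape, the action formula \eqref{action} gives
\[
(z^j)||_{k}\delta=(az+\epsilon d)^{j}\bigl(-\epsilon^{-1}az+d\bigr)^{k-j-2}.
\]
Expanding both factors via the binomial theorem and collecting the coefficient of $z^i$ yields
\[
\epsilon^{j-i}\,a^{i}\,d^{k-i-2}\sum_{n=0}^{\min\{i,j\}}(-1)^{i-n}\binom{j}{n}\binom{k-j-2}{i-n}.
\]
Summing over the four values of $\epsilon$, the power $\epsilon^{j-i}$ detaches as a character sum on $\mu_4$; it equals $4$ when $i\equiv j\pmod 4$ and $0$ otherwise. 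This at once establishes the vanishing $P_{i,j}=0$ for $i\not\equiv j\pmod 4$ and produces both the factor of $4$ and the common prefactor $\bigl(\tfrac{1+3\nu_5}{2}\bigr)^{k-i-2}\bigl(\tfrac{1-3\nu_5}{2}\bigr)^{i}$ in the formulas for the surviving entries.

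It remains to fold in the diagonal $\delta_1$, for which $(z^j)||_{k}\delta_1=(\nu_5-2)^{j}\bigl(-(2+\nu_5)\bigr)^{k-j-2}z^{j}$ contributes only to $P_{j,j}$. To reconcile this with the prefactor just identified, I will invoke the factorisations
\[
1-3\nu_5=(\nu_5-2)(\nu_5-1),\qquad 1+3\nu_5=(2+\nu_5)(\nu_5+1),
\]
together with $(\nu_5-1)(\nu_5+1)=-2$ and $(\nu_5-1)^{2}=-2\nu_5$. A short manipulation then yields
\[
(\nu_5-2)^{i}\bigl(-(2+\nu_5)\bigr)^{k-i-2}=\left(\tfrac{1-3\nu_5}{2}\right)^{i}\left(\tfrac{1+3\nu_5}{2}\right)^{k-i-2}(\nu_5-1)^{k-2}\nu_5^{i},
\]
which is precisely the extra summand $(\nu_5-1)^{k-2}\nu_5^{i}$ appearing in the diagonal case of the theorem.

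The computation is essentially mechanical once the uniform shape of $\delta_2,\ldots,\delta_5$ is spotted: the $\mu_4$ character sum disposes of the off-diagonal vanishing and the factor of $4$ in a single stroke, and the binomial expansion produces the alternating sum cleanly. The only step requiring genuine care is the final algebraic reconciliation of the $\delta_1$-contribution with the common prefactor; the quadratic relations for $\nu_5$ must be invoked in the right order to land exactly on the form $(\nu_5-1)^{k-2}\nu_5^{i}$ prescribed by the theorem, and this is where I expect the bookkeeping to be most error-prone.
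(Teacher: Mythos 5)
Your proof is correct and takes a genuinely different route from the paper's. The paper (following Jacobs) packages the five summands into bivariate generating series $H_{\delta_l}(x,y)$, tabulates them, and reads off the mod-$4$ vanishing and the factor of $4$ from substitution symmetries such as $H_{\delta_3}(x,y)=H_{\delta_2}(\nu_5 x,-\nu_5 y)$. You bypass generating series entirely by spotting the uniform matrix shape $\big(\begin{smallmatrix} a & \epsilon d \\ -\epsilon^{-1}a & d \end{smallmatrix}\big)$ with $\epsilon\in\mu_4$, so that $(z^j)||_k\delta=(az+\epsilon d)^j(-\epsilon^{-1}az+d)^{k-j-2}$ and the $z^i$-coefficient detaches the scalar $\epsilon^{j-i}$; the character sum $\sum_{\epsilon\in\mu_4}\epsilon^{j-i}$ then settles the $4$-versus-$0$ dichotomy in a single stroke. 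This is the same underlying $\mu_4$-symmetry the paper exploits, but your framing is more transparent and less vulnerable to sign bookkeeping (indeed the paper's tabulated $H_{\delta_5}$ and the stated relation $H_{\delta_5}(x,y)=H_{\delta_2}(-x,y)$ carry a sign typo; the correct relation is $H_{\delta_5}(x,y)=H_{\delta_2}(-x,-y)$, which does not change the conclusion). Your treatment of the diagonal $\delta_1$ term is also more complete than the paper's, which records $H_{\delta_1}$ but leaves the reconciliation with the common prefactor of $\delta_2,\dots,\delta_5$ to the reader: the identities $1-3\nu_5=(\nu_5-2)(\nu_5-1)$, $1+3\nu_5=(2+\nu_5)(\nu_5+1)$, $(\nu_5-1)(\nu_5+1)=-2$, and $(\nu_5-1)^2=-2\nu_5$ do combine, as you claim, to give $(\nu_5-2)^i\bigl(-(2+\nu_5)\bigr)^{k-i-2}=\bigl(\tfrac{1-3\nu_5}{2}\bigr)^i\bigl(\tfrac{1+3\nu_5}{2}\bigr)^{k-i-2}(\nu_5-1)^{k-2}\nu_5^i$, producing precisely the extra summand in the diagonal case.
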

	\begin{proof}
		
		The following argument is originally due to Jacobs \cite{jacobs}. 
		For a matrix $P=(P_{i,j})$, we define a generating series as the formal power series $H_P(x,y) := P_{i,j} x^iy^i.$
		
		We have an explicit expression of the generating series for the operator $U_p$ on $\QQ_5\langle z \rangle $ with respect to the power basis 
		
		$$
		H_P(x,y) : = \sum_{i,j \geq 0} P_{i,j} x^iy^i =\sum_{i=1}^5 \frac{(c_ix+d_i)^{k-1}}{c_ix+d_i-a_ixy-b_iy},
		$$
		where $\delta_i = \MATRIX {a_i}{b_i}{c_i}{d_i}$ are listed as in Lemma \ref{Upmatrices}.

		We include a proof here for the convenience of the readers. It's enough to compute the generating series of the operator $||_k\begin{pmatrix}
			a &b\\ c & d
		\end{pmatrix}$ acting on $\QQ_5\langle z \rangle $ with respect to the power basis. By definition,
		$$H(x,y)=\sum_{i\geq 0}y^i\left(cx+d\right)^{k-2}\left(\frac{ax+b}{cx+d}\right)^i=\frac{\left(cx+d\right)^{k-2}}{1-y\cdot\frac{ax+b}{cx+d}}=\frac{\left(cx+d\right)^{k-1}}{cx+d-axy-by}.$$
		$$H_{\delta_1}(x,y)=\frac{(-2-\nu_5)^{k-1}}{-2-\nu_5+(2-\nu_5)xy}= \frac{(-2-\nu_5)^{k-2}}{1-\frac{2-\nu_5}{2+\nu_5}xy} = (-2-\nu_5)^{k-2}\sum_{i\geq0} (\frac{2-\nu_5}{2+\nu_5})^i x^i y^i.$$
		
		Let $\alpha= \frac{1+3\nu_5}{2}$ then \\
		
		\begin{center}
			\begin{tabular}{|c|c|}
				\hline $\delta$&$H_\delta(x,y)$
				\\ \hline
				$\delta_2= \begin{pmatrix}
					\ 1-\alpha & \alpha \\ \alpha-1 & \alpha
				\end{pmatrix}$
				&$H_{\delta_2}(x,y)= \frac{((\alpha-1)x+\alpha)^{k-1}}{(\alpha-1)x+\alpha+(\alpha-1)xy-\alpha y}$
				\\ \hline
				$\delta_3= \begin{pmatrix}
					\ 1-\alpha & -\alpha\nu_5 \\ (\alpha-1)\nu_5 & \alpha
				\end{pmatrix}$&$H_{\delta_3}(x,y)= \frac{((\alpha-1)\nu_5x+\alpha)^{k-1}}{(\alpha-1)\nu_5x+\alpha+(\alpha-1)xy+\alpha \nu_5 y}$
				\\ \hline
				$\delta_4= \begin{pmatrix}
					\ 1-\alpha & \alpha\nu_5 \\ (1-\alpha)\nu_5 & \alpha
				\end{pmatrix}$
				&$H_{\delta_4}(x,y)= \frac{((1-\alpha)\nu_5x+\alpha)^{k-1}}{(1-\alpha)\nu_5x+\alpha+(\alpha-1)xy-\alpha \nu_5 y}$
				\\ \hline
				$\delta_5= \begin{pmatrix}
					\ 1-\alpha & -\alpha \\ 1-\alpha & \alpha
				\end{pmatrix}$
				&$H_{\delta_5}(x,y)= \frac{((1-\alpha)x+\alpha)^{k-1}}{(1-\alpha)x+\alpha+(\alpha-1)xy-\alpha y}$
				\\ \hline
			\end{tabular}
		\end{center}
		
		We observe that $$H_{\delta_3}(x,y)=H_{\delta_2}(\nu_5x,-\nu_5y), H_{\delta_4}(x,y)=H_{\delta_2}(-\nu_5x,\nu_5y), \textrm{and } H_{\delta_5}(x,y)=H_{\delta_2}(-x,y).$$Hence, in the formal power series of the sum $\sum_{i=2}^{5}H_{\delta_i}(x,y)$, the coefficient of $x^iy^j$ is nonzero if and only if  $i\equiv j$  (mod 4), and in this case it is equal to 4 times the coefficient of $x^iy^j$ in $H_{\delta_2}(x,y)$. We are left to compute the power series $\sum a_{i,j}x^iy^j$ of $H_{\delta_2}(x,y)$. Rewriting $H_{\delta_2}(x,y)$ in the formal power series form, we have
		
		$$a_{i,j} = (\frac{1+3\nu_5}{2})^{k-i-2}(\frac{1-3\nu_5}{2})^i\sum_{n=0}^{\min\{i,j\}}(-1)^{i-n}\binom{j}{n}\binom{k-j-2}{i-n}.$$ 
		This combining with our computation of $H_{\delta_1}$ proves the lemma. 
	\end{proof}
	
	\begin{remark}We note that $P_{i,j} =0$ unless $i \equiv j $(mod4). Hence, we would like to decompose the space $S_k^{D,\dagger}(\widehat \Gamma_1(5))$ into 4 sub-spaces corresponding to the four sub-matrices $\big( P_{4i+a,4j+a}\big)_{i,j = 0, 1, \dots} (a=0,..,3)$. The decomposition comes from the study of the Hecke operator at $p=2$. \\
	\end{remark}
\subsection{$U_2$-operator}
		The quaternion algebra $D$ is ramified at $p=2$, i.e. $$D \otimes_\QQ \QQ_2 \cong \QQ_4[\pi_2]/\left( \pi_2^2=2, \pi_2a = 
		\mathrm{Fr}_2(a) \pi_2 \right)$$ where $\mathrm{Fr}_2$ is the Frobenius endomorphism of $\QQ_4$. We define the Hecke operator $U_2$ to be the map
		
		\begin{equation}
			\label{U_2_action}
			\xymatrix@R=0pt{
				U_2: S_k^{D,\dagger}(\widehat \Gamma_1(5)) \ar[r]& S_k^{D,\dagger}(\widehat \Gamma_1(5)\\
				\varphi(g) \ar@{|->}[r] & \varphi(g\pi_2).
			}
		\end{equation}
		We can write $U_2$ explicitly following the same recipe for $U_5$-operator. 
		
		\[
		\xymatrix@C=80pt{
			S_k^{D,\dagger}(\widehat \Gamma_1(5)) \ar[r]^{\varphi \mapsto \varphi(1)} \ar[d]_{\begin{tiny}\varphi \mapsto U_2(\varphi) \end{tiny}} &\QQ_5 \big< z \big>
			\ar[d]_{\begin{tiny}\gothU_2\end{tiny}}
			\\
			S_k^{D,\dagger}(\widehat \Gamma_1(5)) \ar[r]^{\varphi \mapsto \varphi(1)} 
			&\QQ_5 \big< z \big>.}
		\]
		\begin{lemma}
			\label{U_2}
			$\mathfrak{U}_2 \big(\varphi(z)\big) = \left( \frac{-1-\nu_5}{2}\right)^{k-2} \varphi(-\nu_5z).$
		\end{lemma}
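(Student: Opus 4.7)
The plan is to mimic the proof of Lemma \ref{Upmatrices}, adapted to the ramified prime $p=2$. By the definition of $U_2$ and the isomorphism $\varphi\mapsto\varphi(1)$,
\[
\mathfrak{U}_2(\varphi)(z) \;=\; \varphi(\pi_2)(z),
\]
where (abusing notation) $\pi_2\in D_f^\times$ denotes the id\`ele with $2$-component $\pi_2$ and trivial elsewhere. By Lemma \ref{gamma1}, write $\pi_2 = \delta^{-1}u$ uniquely with $\delta\in D^\times$ and $u\in\widehat\Gamma_1(5)$. Left $D^\times$-invariance of $\varphi$ gives $\varphi(\delta^{-1}) = \varphi(1)$, and the trivial $5$-component of $\pi_2$ gives $u_5 = \delta_5$; hence
\[
\mathfrak{U}_2(\varphi)(z) \;=\; \varphi(1)||_k u_5 \;=\; \varphi\,||_k\,\delta_5.
\]

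Next I identify $\delta$ by a local analysis. Since $\Nm(u)\in\widehat\ZZ^\times$ and $\Nm(\pi_2) = -2$, positivity of the reduced norm on $D$ forces $\Nm(\delta) = 1/2$. Integrality of $u$ away from $2$ further implies $\delta\in\calO_D[1/2]$, so $\delta = x/2$ for some $x\in\calO_D$ of reduced norm $2$. The integer solutions to $a^2+b^2+c^2+d^2 = 2$ yield exactly $24$ candidates for $x$. Checking each candidate's image in $M_2(\QQ_5)$ against $\Iw_{5,1}$ (using the quaternion algebra table of §2.1 to read off the $5$-components of $\1,\i,\j,\k$) singles out
\[
\delta \;=\; \tfrac{1}{2}(-\1+\j), \qquad \delta_5 \;=\; \tfrac{1}{2}\begin{pmatrix} -1+\nu_5 & 0 \\ 0 & -1-\nu_5 \end{pmatrix};
\]
most other candidates are eliminated because the lower-left entry of the $M_2(\QQ_5)$-image is a $5$-adic unit, and the remaining few fail the lower-right congruence modulo $5$. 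The condition $\delta\pi_2\in\calO_{D,2}^\times$ at the ramified place is automatic from $v_2(\Nm(\delta\pi_2)) = 0$ together with $\delta\in\pi_2^{-1}\calO_{D,2}^\times$.

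Finally, since $\delta_5$ is diagonal, the slash action gives
\[
(\varphi\,||_k\,\delta_5)(z) \;=\; \Bigl(\tfrac{-1-\nu_5}{2}\Bigr)^{k-2} \varphi\!\Bigl(\tfrac{-1+\nu_5}{-1-\nu_5}\,z\Bigr) \;=\; \Bigl(\tfrac{-1-\nu_5}{2}\Bigr)^{k-2} \varphi(-\nu_5 z),
\]
where the final identification uses $\nu_5^2 = -1$ to compute $(1-\nu_5)/(1+\nu_5) = -\nu_5$. The main obstacle I anticipate is step two: unlike the $U_5$ case, where the five coset representatives arise from a uniform Bruhat-type decomposition, here one must pin down a single rational quaternion by a finite search, and verifying all the local conditions---particularly the integrality at the ramified place---requires a little care. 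Everything after that is a one-line slash-action computation.
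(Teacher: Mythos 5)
Your proof is correct and produces exactly the same slashing matrix $\delta_5 = \tfrac12\bigl(\begin{smallmatrix}-1+\nu_5 & 0 \\ 0 & -1-\nu_5\end{smallmatrix}\bigr)$ that the paper uses, but the derivation of $\delta\in D^\times$ is organized differently. The paper proceeds in two steps: it first multiplies $\pi_2$ on the left by the diagonal embedding of $(1+\j)^{-1}$ (using that $\Nm(1+\j)=2$ makes $\pi_2/(1+\j)$ a unit at $2$ and $(1+\j)^{-1}$ integral at all finite places other than $5$), which moves the entire contribution to the place $5$; it then observes that the residual $5$-component $(1+\j)^{-1}$ is not in $\Iw_{5,1}$ and fixes this by a further multiplication by $-\1\in\calO_D^\times$. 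You instead run a single systematic search in the spirit of Lemma~\ref{Upmatrices}: the norm constraint $\Nm(\delta)=1/2$ together with integrality away from $2$ cuts the candidates down to the $24$ Hurwitz quaternions of reduced norm $2$ divided by $2$, and the $\Iw_{5,1}$-condition at $5$ then filters these to the unique survivor $\tfrac12(-\1+\j)=-(1+\j)^{-1}$. Both routes produce the same $\delta$, as uniqueness in Lemma~\ref{gamma1} forces. Your version makes the analogy with the $U_5$ computation transparent and avoids guessing the auxiliary quaternion $1+\j$, whereas the paper's is shorter once one knows that trick. One small remark on your verification at the ramified place $2$: it is cleanest to say that the normalized valuation on the division algebra $D\otimes\QQ_2$ is $v_2\circ\Nm$, so $v(\delta\pi_2)=v_2(\tfrac12)+v_2(-2)=0$, hence $\delta\pi_2\in D^\times(\ZZ_2)$; this is what your sentence asserts, just phrased slightly indirectly.
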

		\begin{proof}
			We have	
			\begin{align*}
				\varphi(\pi_2)=&\varphi\left(((1+\j)^{-1},(1+\j)^{-1},(1+\j)^{-1},\dots) (\pi_2,1,1,\dots)\right)\\=& \varphi(\frac{\pi_2}{1+\j},\frac{1}{1+\j},\frac{1}{1+\j},\dots)=\varphi(1,1,(1+\j)^{-1},\dots)
			\end{align*}
			Here, the last equality holds because at all places $\ell \neq 5$,$(1+\j)^{-1}$  is belong to $\GL_2(\ZZ_l)$). 
			
			Hence, 
			\begin{align*}
				U_2 \varphi(1)(z)=&\varphi((1+j)^{-1})(z)= \varphi(-1)|_{-(1+j)^{-1}}(z)\\=& \varphi|_{\begin{pmatrix}
						\frac{-1+\nu_5}{2} & 0 \\ 0 & \frac{-1-\nu_5}{2}
				\end{pmatrix}}(z) = \left( \frac{-1-\nu_5}{2}\right)^{k-2} \varphi(1)(-\nu_5z).
			\end{align*}
		\end{proof}
		
		The lemma gives us the decomposition: 
		$$S_k^{D,\dagger}(\widehat \Gamma_1(5))=\bigoplus_{a=0,\dots,3} S_k^{D,\dagger}(\widehat \Gamma_1(5))^{U_2= \left( \frac{1+\nu_5}{2}\right)^{k-2}\cdot \left(-\nu_5\right)^a}$$ 
		
		To ease the notation, we write $S_{k,a}^{D,\dagger}(\widehat \Gamma_1(5)) :=S_k^{D,\dagger}(\widehat \Gamma_1(5))^{U_2= \left( \frac{1+\nu_5}{2}\right)^{k-2} \cdot \left(-\nu_5\right)^a}$\\
			\section{SAGE computation of the $n \times n$ minor}
		Explicitly, $S_{k,a}^{D,\dagger}(\widehat \Gamma_1(5))$ corresponds under \eqref{isomorphism} to $z^a\cdot \QQ_5\big< z^4 \big>.$  
		In each of the four sub-spaces, we observe a surprising fact about the upper left minor matrices. \\
		Let $ P_n(k,a) =(P_{4i+a,4j+a})_{0\leq i,j\leq n-1}$ be the upper $n \times n$ left minor  and choose the weight $k=4k_\bullet+2a-2$. 
		
		We use SAGE to compute the co-rank of these minors. 
		The result shows that for lots of $k_\bullet$, $P_n(k,a)$ does not always have full rank as show in the following tables. The blank places mean that the corresponding corank is zero. 
		
	\begin{center}
		{Corank of $P_n(4k_\bullet-2,0)$}
		\begin{tabular}{|c|c|c|c|c|c|c|c|c|c|c|c|c|c|c|c|c|c|c|}
			\hline $n \times n$ &
			$k_\bullet$ =  4 &  5 & 6 & 7 & 8 & 9 & 10 & 11 & 12&13&14&15&16&17&18&19&20&21 
			\\ \hline
			$2 \times 2$&1&1&1&&1&&&&&&&&&&&&&
			\\ \hline
			$3 \times 3$&&1&2&1&2&1&1&1&1&&1&&&&&&&
			\\ \hline
			$4 \times 4$&&&1&1&3&2&2&2&2&1&2&1&1&1&1&&1&
			\\ \hline
			$5 \times 5$ &&&&&2&2&3&3&3&2&3&2&2&2&2&1&2&1
			\\ \hline
			$6 \times 6$& & & & &1& 1& 2& 3& 4& 3& 4& 3&3&3&3&2&3&2
			\\ \hline
			$7 \times 7$ &&&&&&&1&2&3&3&5&4&4&4&4&3&4&3
			\\ \hline
			$8 \times 8$ &&&&&&&&1&2&2&4&4&5&5&5&4&5&4
			\\ \hline
			$9 \times 9$ &&&&&&&&&1&1&3&3&4&5&6&5&6&5
			\\ \hline
			$10 \times 10$ &&&&&&&&&&&2&2&3&4&5&5&7&6
			\\ \hline
		\end{tabular}
	\end{center}
	\medskip
	\begin{center}
		
		{Corank of $P_n(4k_\bullet,1)$}
		\begin{tabular}{|c|c|c|c|c|c|c|c|c|c|c|c|c|c|c|c|c|c|c|}
			\hline $n\times n$&
			$k_\bullet$ = 3 & 4 &  5 & 6 & 7 & 8 & 9 & 10 & 11 & 12&13&14&15&16&17&18&19&20 
			\\ \hline
			$2 \times 2$&1&1&1&1&1&&1&&&&&&&&&&&
			\\ \hline
			$3 \times 3$&&&1&2&2&1&2&1&1&1&1&&1&&&&&
			\\ \hline
			$4 \times 4$&&&&1&2&2&3&2&2&2&2&1&2&1&1&1&1&
			\\ \hline
			$5 \times 5$ &&&&&1&1&3&3&3&3&3&2&3&2&2&2&2&1
			\\ \hline
			$6 \times 6$& & & & && & 2& 2& 3& 4& 4& 3&4&3&3&3&3&2
			\\ \hline
			$7 \times 7$ &&&&&&&1&1&2&3&4&4&5&4&4&4&4&3
			\\ \hline
			$8 \times 8$&&&&&&&&&1&2&3&3&5&5&5&5&5&4
			\\ \hline
			$9 \times 9$ &&&&&&&&&&1&2&2&4&4&5&6&6&5
			\\ \hline
			$10 \times 10$ &&&&&&&&&&&1&1&3&3&4&5&6&6		
			\\ \hline		
		\end{tabular}
	\end{center}
	\medskip   
	\begin{center}
		
		{Corank of $P_n(4k_\bullet+2,2)$}
		
		\begin{tabular}{|c|c|c|c|c|c|c|c|c|c|c|c|c|c|c|c|c|c|c|}
			\hline $n\times n$&
			$k_\bullet$ = 3 & 4 &  5 & 6 & 7 & 8 & 9 & 10 & 11 & 12&13&14&15&16&17&18&19&20 
			\\ \hline
			$2 \times 2$&&2&1&1&1&1&&1&&&&&&&&&&
			\\ \hline
			$3 \times 3$&&1&1&2&2&2&1&2&1&1&1&1&&1&&&&
			\\ \hline
			$4 \times 4$&&&&1&2&3&2&3&2&2&2&2&1&2&1&1&1&1
			\\ \hline
			$5 \times 5$ &&&&&1&2&2&4&3&3&3&3&2&3&2&2&2&2
			\\ \hline
			$6 \times 6$& & & & && 1& 1& 3& 3& 4& 4& 4&3&4&3&3&3&3
			\\ \hline
			$7 \times 7$ &&&&&&&&2&2&3&4&5&4&5&4&4&4&4
			\\ \hline
			$8 \times 8$&&&&&&&&1&1&2&3&4&4&6&5&5&5&5
			\\ \hline
			$9 \times 9$ &&&&&&&&&&1&2&3&3&5&5&6&6&6
			\\ \hline
			$10 \times 10$ &&&&&&&&&&&1&2&2&4&4&5&6&7		
			\\ \hline
		\end{tabular}
	\end{center}
	\medskip   
	
	\begin{center}
		
		{Corank of $P_n(4k_\bullet+4,3)$}
		\begin{tabular}{|c|c|c|c|c|c|c|c|c|c|c|c|c|c|c|c|c|c|c|}
			\hline $n\times n$&
			$k_\bullet$ = 2 & 3 & 4 &  5 & 6 & 7 & 8 & 9 & 10 & 11 & 12&13&14&15&16&17&18&19
			\\ \hline
			$2 \times 2$&&1&1&2&1&1&1&1&&1&&&&&&&&
			\\ \hline
			$3 \times 3$&&&&2&2&2&2&2&1&2&1&1&1&1&&1&&
			\\ \hline
			$4 \times 4$&&&&1&1&2&3&3&2&3&2&2&2&2&1&2&1&1
			\\ \hline
			$5 \times 5$&&&&&&1&2&3&3&4&3&3&3&3&2&3&2&2
			\\ \hline
			$6 \times 6$&&&&&&&1&2&2&4&4&4&4&4&3&4&3&3
			\\ \hline
			$7 \times 7$ &&&&&&&&1&1&3&3&4&5&5&4&5&4&4
			\\ \hline
			$8 \times 8$&&&&&&&&&&2&2&3&4&5&5&6&5&5
			\\ \hline
			$9 \times 9$ &&&&&&&&&&1&1&2&3&4&4&6&6&6
			\\ \hline
			$10 \times 10$ &&&&&&&&&&&&1&2&3&3&5&5&6		
			\\ \hline
		\end{tabular}
	\end{center}
	\medskip
		The corank of $P_n(k,a)$ shows the unimodal pattern that looks like the multiplicity of the ghost zeros \cite{bergdall-pollack}. For each fixed weight $k$, as $n$ increase, the corank $P_n(k,a)$ pattern is $$1,2,3, \dots, 3,2,1$$
		In the next chapter, we will state this pattern in a more precise form in term of the dimensions of the spaces of classical automorphic forms.

	\section{The classical automorphic forms}
	From now on, we consider only the weight k such that $k= 4k_\bullet + 2a -2$.
	
	We recall spaces of classical automorphic forms and compute their dimensions.
	\begin{eqnarray}
		\nonumber
		& S_{k}^{D}(\widehat \GL_2(\ZZ_5)) = \big\{ \varphi: D^\times \backslash D_f^\times \to \QQ_5[z]^{\deg \leq k-2} \ \textrm{s.t. } \varphi(x u) = \varphi(x)||_k{u_5} \textrm{ for all }u \in \widehat \GL_2(\ZZ_5) \big\}.
		\\
		\nonumber
		&S_{k}^{D}(\widehat \Gamma_1(5)) = \big\{ \varphi: D^\times \backslash D_f^\times \to \QQ_5[z]^{\deg \leq k-2} \ \textrm{s.t. } \varphi(x u) = \varphi(x)||_{k}{u_5} \textrm{ for all }u \in \widehat \Gamma_1(5) \big\}.
	\end{eqnarray}
	
	Here we extend the action of $\widehat \Gamma_1(5)$ defined in (\ref{action}) to $\widehat \GL_2(\ZZ_5).$
	The action $U_2$, defined in  \eqref{U_2_action} stabilizes each of $S_{k}^{D}(\widehat \GL_2(\ZZ_5))$ and $S_{k}^{D}(\widehat \Gamma_1(5))$, and decomposes these space into 4 sub-spaces  $S_{k,a}^{D}(\widehat \GL_2(\ZZ_5)), S_{k,a}^{D}(\widehat \Gamma_1(5))$ with the dimension $d_{k,a}^\unr$ and $d_{k,a}^\Iw$ respectively. We can compute them explicitly. 
	
\begin{theorem}\label{dim_iw} For integers $k= 4k_\bullet + 2a -2$, we have 
	$d_{k,a}^\Iw = k_\bullet$.
\end{theorem}
\begin{proof}
	The isomorphism in (\ref{isomorphism}) induces the following commutative diagram
	\[
	\xymatrix@C=80pt{
		S_k^D(\widehat \Gamma_1(5))  \ar@{^{(}->}[r] \ar[d]_{\cong} &S_k^{D,\dagger}(\widehat \Gamma_1(5))
		\ar[d]_{\cong}
		\\
		\QQ_5[z]^{\deg \leq k-2} \ar@{^{(}->}[r]
		&\QQ_5 \big< z \big>.}
	\]
	Since the $U_2$-action is compatible with vertical maps being isomorphism, we have
	$$S_{k,a}^{D}(\widehat \Gamma_1(5)) \cong \QQ_5 z^a \oplus \QQ_5 z^{4+a} \oplus \cdots = \bigoplus_{0 \leq i\leq k-2,\\ i\equiv a \textrm{ mod 4}} \QQ_5z^{i}$$
	The dimension formula follows easily.
\end{proof}
It is more difficult to compute $d_{k,a}^\unr$, which requires some representation theory.  
We use the following well-known fact from the cohomology theory of finite groups: 
\begin{lemma}
	If $G$ is a finite group and M is any G-module, then $H^i(G,M)$ is annihilated by the order of the group $G$ for any $i \geq 1$.
\end{lemma}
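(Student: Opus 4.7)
The plan is to prove the lemma via the classical restriction--corestriction (transfer) argument from group cohomology. For any subgroup $H \leq G$ of finite index, one has a restriction map $\mathrm{res}^G_H \colon H^i(G,M) \to H^i(H,M)$ and a corestriction map $\mathrm{cor}^G_H \colon H^i(H,M) \to H^i(G,M)$ satisfying the key identity
\[
\mathrm{cor}^G_H \circ \mathrm{res}^G_H \;=\; [G:H]\cdot \mathrm{id}_{H^i(G,M)}.
\]
This identity is the pivot on which the entire argument turns.

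I would then apply the formula to the trivial subgroup $H = \{e\}$. First I would observe that $H^i(\{e\}, M) = 0$ for every $i \geq 1$: indeed $\ZZ[\{e\}] = \ZZ$, and since $\ZZ$ is a free (hence projective) module over itself, $H^i(\{e\}, M) = \mathrm{Ext}^i_{\ZZ}(\ZZ, M) = 0$ for $i \geq 1$. Consequently the restriction map $H^i(G,M) \to H^i(\{e\}, M) = 0$ is identically zero, and so is the composition $\mathrm{cor}^G_{\{e\}} \circ \mathrm{res}^G_{\{e\}}$. But this same composition equals multiplication by $[G:\{e\}] = |G|$, so $|G|$ annihilates $H^i(G,M)$, as required.

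The only real obstacle is having the corestriction map together with the composition identity at hand. Both are classical: one constructs $\mathrm{cor}^G_H$ via summation over a set of coset representatives on the bar resolution, and the composition identity then follows from a direct computation with these representatives. Since the lemma itself is entirely standard and used here only as a black box, in a full write-up I would simply cite Brown's \emph{Cohomology of Groups}, Ch.~III, \S 9, or Serre's \emph{Local Fields}, Ch.~VIII, rather than redevelop this machinery in the paper.
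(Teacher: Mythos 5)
Your proof is correct: the restriction--corestriction identity $\mathrm{cor}^G_H \circ \mathrm{res}^G_H = [G:H]\cdot\mathrm{id}$ applied to the trivial subgroup, together with the vanishing of $H^i(\{e\},M)$ for $i\geq 1$, is exactly the classical argument for this fact. The paper itself states this lemma as a well-known black box and supplies no proof at all, so your write-up (or the citation to Brown or Serre you propose) is entirely consistent with how the lemma is used there.
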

As a corollary,  if $p \nmid \|G\|$, then $-^G$ is exact on $\ZZ_p[G]-$modules.
In our case, the order of $\calO_D^\times$ is 24 which is coprime to 5, thus $H^i(\calO_D^\times,M) = 0$ for all $\ZZ_5 \left[\calO_D^\times\right] $-module $M$ and $i \geq 1$. Hence, we have $H^0\left(\calO_D^\times,M\right) = M^{\calO_D^\times}$ is exact in $M$. 

Let us consider the right representation $\Sym^{a}\QQ_5^{\oplus 2} \otimes \det^b$ of $\GL_2(\QQ_5)$. We may identify $\Sym^{a}\QQ_5^{\oplus 2} \otimes \det^b$ with the vector space $\QQ_5\left[z\right]^{\deg \leq a}$, where the action of $\GL_2(\QQ_5)$ is given by:

\begin{equation}
	f||_{\big(\begin{smallmatrix}
			\alpha&\beta\\\gamma&\delta
		\end{smallmatrix}\big)}(z): = (\alpha \delta - \beta \gamma)^b(\gamma z+\delta)^a f\big( \frac{\alpha z+\beta}{\gamma z+\delta}\big).
\end{equation}

When $a=k-2$ and $b =0$, we get the $k-2$th symmetric power $\Sym^{k-2} \QQ_5^{\oplus 2}$.
The following isomorphism is equivariant   for the $ \GL_2(\ZZ_5)$-action. 
\begin{equation}
	\xymatrix@R=0pt{\Sym^{k-2}\QQ_5^{\oplus 2} \ar@{<->}[r] & \QQ_5[z]^{\leq k-2} \\
		\sum_{i =0}^{k-2} a_i X^i Y^{k-2-i} \ar@{<->}[r] & \sum_{i=0}^{k-2} a_i z^i.
	}
\end{equation}

In the proof of Lemma \ref{gamma1}, we recall that 	\[
D_f^\times = D ^\times \times^{\calO_D^\times}  \widehat \GL_2(\ZZ_5).
\] Hence,  

$$ S_{k}^{D}(\widehat \GL_2(\ZZ_5)) \cong \Hom_{\GL_2[\ZZ_5]}(D^\times \backslash D_f^\times / \widehat \GL_2(\ZZ_5),\, \Sym^{k-2} \QQ_5^{\oplus 2}) \cong \left(\Sym^{k-2} \QQ_5^{\oplus 2}\right)^{\calO_D^\times}.$$

Moreover, we see that  $$  
\left( \Sym^{k-2} \QQ_5^{\oplus 2}\right)^{\calO_D^\times} \cong  \left( \Sym^{k-2} \ZZ_5^{\oplus 2}\right)^{\calO_D^\times}\left[\frac{1}{5}\right]. $$
Since the order of $\calO_D^\times$ is not divisible by 5,

$$  \left( \Sym^{k-2} \ZZ_5^{\oplus 2}\right)^{\calO_D^\times}/5 \cong \big(\Sym^{k-2} \FF_5^{\oplus 2}\big)^{\calO_D^\times}.$$

Thus, $\dim_{\QQ_5}\left( \Sym^{k-2} \QQ_5^{\oplus 2}\right)^{\calO_D^\times} = 
\rank_{\ZZ_5}\left( \Sym^{k-2} \ZZ_5^{\oplus 2}\right)^{\calO_D^\times} = \rank_{\FF_5}\left( \Sym^{k-2} \FF_5^{\oplus 2}\right)^{\calO_D^\times}.$\\

We would like to find the Jordan--H\"older factors of $\Sym^{k-2} \FF_5^{\oplus 2}$ as a represenation of $\FF_5\left[\GL_2\left(\FF_5\right)\right]$.

By Proposition 2.17 in \cite{breuil}, the Serre weights $\left(\Sym^{m} \FF_5^{\oplus 2} \otimes \det^n\right)$ for $m = 0,1,\dots,4$ and $n=0,\dots,3$ exhaust all irreducible representations of $\GL_2( \FF_5)$ over $\FF_5.$ 
We use $\sigma_{m,n}$ to denote the Serre weight  $\left(\Sym^{m} \FF_5^{\oplus 2} \otimes \det^n\right)$.
By checking the action of all element of $\calO_D^\times$, we have the following lemma.

\begin{lemma}
	$ \left(\sigma_{m,n}\right)^{\calO_D^\times}$ is non-zero if and only if $m=0$.
\end{lemma}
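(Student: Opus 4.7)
The plan is to reduce the question to checking the symmetric-power representation $\Sym^m \FF_5^{\oplus 2}$ and then verify the remaining cases by testing against a short list of elements of $\calO_D^\times$.

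First I would observe that every element of $\calO_D^\times$ has reduced norm $1$ in $D$, hence its image in $\GL_2(\FF_5)$ has determinant $1$; this can be read off directly from the table in Section~2.1. Consequently $\det^n$ is trivial on $\calO_D^\times$ and
$$\sigma_{m,n}^{\calO_D^\times} = \bigl(\Sym^m \FF_5^{\oplus 2}\bigr)^{\calO_D^\times}$$
independent of $n$. Next, $-\1 \in \calO_D^\times$ acts on $\Sym^m \FF_5^{\oplus 2}$ as multiplication by $(-1)^m$, which kills all invariants for odd $m$. Since $m = 0$ gives a one-dimensional trivial representation (so the invariants form the whole space), the problem reduces to showing that there are no non-zero invariants for $m \in \{2, 4\}$.

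For these remaining cases I would use the action of just three generators. The element $\j$, whose image mod $5$ is $\MATRIX{2}{0}{0}{3}$, sends $z^i$ to $3^m (-1)^i z^i$; invariance forces $i$ odd when $m = 2$ and $i$ even when $m = 4$, pinning $f$ down to $a_1 z$ in the first case and to $a_0 + a_2 z^2 + a_4 z^4$ in the second. The element $\i$, whose image is $\MATRIX{0}{1}{-1}{0}$, then acts by $f(z) \mapsto (-1)^m z^m f(-1/z)$; this forces $a_1 = 0$ when $m = 2$ and $a_0 = a_4$ when $m = 4$. For $m = 4$ I would finally feed the remaining family $a_0(1 + z^4) + a_2 z^2$ through the half-integer unit $\tfrac{1}{2}(\1 + \i + \j + \k)$, whose mod-$5$ image is $\MATRIX{4}{2}{1}{2}$: expanding $(z+2)^4$ and $(4z+2)^4 = (2-z)^4$ in $\FF_5[z]$ and equating coefficients yields a $3\times 2$ linear system over $\FF_5$ whose only solution is $a_0 = a_2 = 0$.

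The only real obstacle is the last bookkeeping step for $m = 4$; conceptually nothing is subtle once the reductions coming from $\det$ and from $-\1$ are in place. A possible alternative would be to compute $\dim V^{\calO_D^\times}$ via $|\calO_D^\times|^{-1} \sum_g \chi_V(g)$, which is legitimate since $|\calO_D^\times| = 24$ is coprime to $5$, but this requires tabulating the seven conjugacy classes of the binary tetrahedral group and is no shorter than the direct calculation above.
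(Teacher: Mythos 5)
Your proof is correct and follows the same route the paper intends: the paper merely asserts the result ``by checking the action of all elements of $\calO_D^\times$,'' and your argument carries out that check efficiently, using triviality of $\det$ on $\calO_D^\times$, the central element $-\1$ to kill odd $m$, and the elements $\j$, $\i$, and $\tfrac12(\1+\i+\j+\k)$ to eliminate $m=2,4$ (the final linear system indeed reduces to $a_0+a_2=0$ and $3a_0+a_2=0$, forcing $a_0=a_2=0$). No gaps.
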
 
We recall that $U_2$-action on $S_{k}^{D}(\widehat \GL_2(\ZZ_5)) \cong  \left(\Sym^{m} \QQ_5^{\oplus 2} \otimes \det^n\right)^{\calO_D^\times}$ translates to the action of $\begin{pmatrix}
	\frac{-1+\nu_5}{2} & 0 \\ 0 & \frac{-1-\nu_5}{2}
\end{pmatrix}$ as in Lemma \ref{U_2}. In the case when $ \left(\sigma_{m,n}\right)^{\calO_D^\times}$ is nonzero, i.e $m=0$, $\sigma_{0,n}$ is one dimension on which $U_2$ acts by  $\begin{pmatrix}
	3 & 0 \\ 0 & 1
\end{pmatrix}$. 
Hence, each $\sigma_{0,a}$ in the  multiset of Jordan-H\"older factors of $\sigma_{k-2,0}$ contributes one to the dimension $d_{k,a}^\unr$. Let $\Mult_{\sigma_{m,n}}\left(\sigma_{k-2,0}\right)$ be the multiplicity of $\sigma_{m,n}$ in the multiset of Jordan--H\"older factor of $\sigma_{k-2,0}$. We have:
\begin{equation}
	\label{dimMult}
	d_{k,a}^\unr= \Mult_{\sigma_{0,a}}\left(\sigma_{k-2,0}\right)
\end{equation}

From now on, we use the notation \[\delta_{a,b,c}= \begin{cases}
	&1 \textrm{ if } a \equiv b \bmod c.
	\\&
	0 \textrm{ otherwise.}	
\end{cases}\]
\begin{theorem}\label{dim_unr}  For integers $k= 4k_\bullet + 2a -2$, we have 
	$d_{k,a}^\unr = \lfloor \frac{k_\bullet-a+5}{6}\rfloor -\delta_{k_\bullet,a+2,6}$.
\end{theorem}
\begin{proof}
	It is enough to prove the equality in the Gronthediek group 
	
	Let $\rmB = \begin{pmatrix}{\FF_5^\times}&{\FF_5}\\{0}&{\FF_5^\times} \end{pmatrix}$ and $\rmG = \GL_2(\FF_5)$
	We consider the following character $\eta$ of $ \rmB$
	$$
	\eta: \rmB\rightarrow \FF_5^\times,\qquad  \Matrix {\alpha}{\beta}0{\delta} \mapsto \alpha.
	$$
	For $k \in \ZZ$, we have an induced representation
	$$ \Ind_{\rmB}^{\rmG}(\eta^k): = \{f: \rmG \to \FF\; |\; f(bg) = \eta^k( b)f( g), \textrm{ for all } b \in  \rmB\},$$
	which we equip with the \emph{right action} given by
	$$ f|_{h}(g): = f( g h^\rmT), \quad \textrm{for }  g, h \in  \rmG.$$
	This is the transpose of the usual left action.
	We use the following two lemmas from \cite[Lemma~4.9]{paskunas}, \cite[Proposition~2.4]{Reduzzi}, respectively. 
	\begin{lemma}
		\label{L:exact sequence of induced representation}
		For any integer $1\leq k\leq p-1$, we have an exact sequence of
		$\rmG$-representations:
		\begin{equation}
			\nonumber
			0\rightarrow \sigma_{k,0}\rightarrow
			\Ind_{\rmB}^{\rmG}\eta^{k}\rightarrow \sigma_{p-1-k, k}\rightarrow 0.\end{equation}
	\end{lemma}
	\begin{lemma}
		\label{L:exact sequence of (p+1)-steps jump}
		For any integer $k\geq p+1$, we have an exact sequence of
		$\rmG$-representations:
		$$
		0\rightarrow \sigma_{k-(p+1),1}\xrightarrow{i}
		\sigma_{k,0}\xrightarrow{\pi} \Ind_{ \rmB}^{ \rmG}\eta^{k}\rightarrow
		0.
		$$
	\end{lemma}
	
	Let $p = 5$, for $k \in \ZZ$, denote $\bar k = k \mod 4$, from the two exact sequences above,  we have the equality in $\mathrm{Groth}(\FF_5[\GL_2(\FF_5)])$
	\begin{equation}
		\label{Grothequa}
		[\sigma_{k-2,0}]-[\sigma_{k-8, 1}]=[\sigma_{\overline {k-2},0}]+[\sigma_{4-\overline {k-2}, \overline {k-2}}].
	\end{equation}
	We consider the case $k \mod 4 =2$. The other case is very similar. The equality \eqref{Grothequa} becomes: 
	$$[\sigma_{k-2,0}]-[\sigma_{k-8, 1}]=[\sigma_{0,0}]+[\sigma_{4,0}].$$
	
	Replacing $k-2$ by $k-8$ and then twisting by character det, we have
	
	$$[\sigma_{k-8,1}]-[\sigma_{k-14, 2}]=[\sigma_{2,1}]+[\sigma_{2,3}].$$

	Continuing this way, 
	$$[\sigma_{k-14,2}]-[\sigma_{k-20, 3}]=[\sigma_{0,2}]+[\sigma_{4,2}].$$
	$$[\sigma_{k-20,1}]-[\sigma_{k-26, 0}]=[\sigma_{2,3}]+[\sigma_{2,1}].$$
	
	Adding up these equalities, on left side we have $[\sigma_{k-2,0}]-[\sigma_{k-26, 0}]$ and on the right side the Serre weight $\sigma_{0,a}$ for $a$ = 0 or 2 appears exactly once. Combining this with \eqref{dimMult}, we showed that  $d_{k+24,a}^\unr = d_{k,a}^\unr + 1$. For $k \leq 26$, by counting the Serre weight $\sigma_{0,a}$, we can compute the value $d_{k,a}^\unr$ as in the table. \\
	\begin{center}
		\begin{tabular}{|c|c|c|c|c|}
			\hline $k_\bullet$ &
			$d_{4k_\bullet-2,0}^\unr$& $d_{4k_\bullet,1}^\unr$ & $d_{4k_\bullet+2,2}^\unr$ & $d_{4k_\bullet+4,3}^\unr$ 
			\\ \hline
			$1$&$1$&$0$&$0$&$0$
			\\ \hline
			$2$&$0$&$1$&$0$&$0$
			\\ \hline
			$3$&$1$&$0$&$1$&$0$
			\\ \hline
			$4$&$1$&$1$&$0$&$1$
			\\ \hline
			$5$&$1$&$1$&$1$&$0$
			\\ \hline
			$6$&$1$&$1$&$1$&$1$
			\\ \hline
		\end{tabular}
	\end{center}	
	From this, we conclude that 
	$$d_{k,a}^\unr = \lfloor \frac{k_\bullet-a+5}{6}\rfloor -\delta_{k_\bullet,a+2,6}.$$
	
\end{proof}
	\section{Main Theorem}
\begin{theorem}
	For an integer $k \equiv 2a-2 \bmod 4$ and an integer $n$ such that $d_{k,a}^\unr \leq n \leq d_{k,a}^\Iw -d_{k,a}^\unr$, the corank of $P_n(k,a)$ is at least 
	\begin{equation}
		\label{E:rank of n*n minor}
		\min\{ n - d_{k,a}^\unr, d_{k,a}^\Iw - d_{k,a}^\unr -n\}.
	\end{equation}
\end{theorem}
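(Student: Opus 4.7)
The approach is to reduce the corank bound on the upper-left minor $P_n(k,a)$ to a statement about the action of $\gothU_5$ on the newform subspace of the classical automorphic forms, via a Hecke-theoretic block decomposition of the matrix $P$.

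First I would observe that $\gothU_5$ preserves the classical subspace $V^{\Iw}_{k,a}:= S^D_{k,a}(\widehat\Gamma_1(5))$, since each matrix $\delta_i$ of Lemma \ref{Upmatrices} acts on polynomials of degree at most $k-2$ via the $||_k$-action. Consequently, under the embedding $V^{\Iw}_{k,a}\hookrightarrow \QQ_5\langle z\rangle_a$ identifying $V^{\Iw}_{k,a}$ with $\mathrm{span}\{z^{a+4i}\}_{i=0}^{d^{\Iw}-1}$, the matrix of $\gothU_5|_{V^{\Iw}}$ is the upper-left $d^{\Iw}\times d^{\Iw}$ block of $P$. In particular, $P_n(k,a)$ for $n\leq d^{\Iw}$ is a further upper-left sub-matrix, and its kernel corresponds to $\{v\in V_n:\gothU_5 v\in V^{\Iw}_{\geq n}\}$, where $V_n=\mathrm{span}(z^{a+4i}:i<n)$ and $V^{\Iw}_{\geq n}$ is the span of the remaining classical monomials.

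Next I would invoke the Hecke decomposition $V^{\Iw}_{k,a}=V^{\old}\oplus V^{\new}$, with $V^{\old}$ of dimension $2d^{\unr}_{k,a}$ spanned by the images of the two degeneracy maps from $V^{\unr}_{k,a}=S^D_{k,a}(\widehat\GL_2(\ZZ_5))$ and $V^{\new}$ of dimension $d^{\new}:=d^{\Iw}-2d^{\unr}$, on which the Atkin--Lehner relation forces $\gothU_5^2=5^{k-2}$. The key structural input is that, using the $\calO_D^\times$-invariant polynomial description of $V^{\unr}$ together with an explicit formula for the second degeneracy (which, via the decomposition of $\Matrix{5}{0}{0}{1}$, is governed by the element $\delta_1=\mathbf j-\mathbf 2$ of Lemma \ref{Upmatrices}), one can find a triangular change of basis of $V^{\Iw}_{k,a}$ (preserving each $V_n$) so that the matrix of $\gothU_5|_{V^{\Iw}}$ acquires the block form
\[
\begin{pmatrix} A & * & * \\ 0 & E & 0 \\ 0 & * & D \end{pmatrix},
\]
where $A,D$ are the $d^{\unr}\times d^{\unr}$ slope-$0$ and slope-$(k-1)$ blocks of $\gothU_5|_{V^{\old}}$ and $E$ is the $d^{\new}\times d^{\new}$ matrix of $\gothU_5|_{V^{\new}}$. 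The two zero blocks encode the $\gothU_5$-stability of $V^{\old}$ combined with the low/high degree support of the two degeneracy images.

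In this block form, for $d^{\unr}<n<d^{\Iw}-d^{\unr}$ the matrix $P_n(k,a)$ becomes $\MATRIX{A}{B}{0}{E_m}$ where $E_m$ is the upper-left $m\times m$ block of $E$ and $m:=n-d^{\unr}$; since $A$ is invertible (the slope-$0$ block), $\mathrm{corank}(P_n)=\mathrm{corank}(E_m)$. Finally, the Atkin--Lehner involution on $V^{\new}$ acts as a palindromic reflection in the adapted basis, and together with $\gothU_5^2=5^{k-2}$ this forces the vanishing $E_{ij}=0$ whenever $i+j<d^{\new}-1$. This anti-diagonal sparsity yields $\mathrm{rank}(E_m)\leq \max(0,2m-d^{\new})$, hence $\mathrm{corank}(P_n)\geq \min(m,d^{\new}-m)=\min(n-d^{\unr}_{k,a},\,d^{\Iw}_{k,a}-d^{\unr}_{k,a}-n)$, as required. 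The main obstacles are (a) the explicit low/high degree support of the two degeneracy images in the polynomial basis, which requires a careful analysis of the $\calO_D^\times$-invariants; and (b) the anti-diagonal vanishing of $E$, which should follow from the Atkin--Lehner structure but may require a nontrivial algebraic identity generalizing the rank-drop identity $P_{00}P_{44}=P_{04}P_{40}$ that one verifies directly in the $2\times 2$ case, where it ultimately reduces to $(\nu_5-1)^{k-2}$ satisfying a specific quadratic.
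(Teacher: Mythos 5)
Your overall instinct --- write $\gothU_5$ on the classical subspace as (something of rank $\le d^{\unr}_{k,a}$) plus (something anti-diagonal) --- is the right one, but the two steps that carry the entire argument are exactly the two you flag as ``obstacles,'' and the first of them fails as stated. For your block computation to say anything about $P_n(k,a)$, the change of basis must preserve the flag $V_1\subset V_2\subset\cdots$ of spans of low-degree monomials; but then the vanishing of the blocks below $A$ in the first column forces $V_{d^{\unr}}=\mathrm{span}(z^a,z^{a+4},\dots,z^{a+4(d^{\unr}-1)})$ to be $\gothU_5$-stable and equal to the slope-$0$ old subspace. That subspace is the image of a degeneracy map from $S^D_{k,a}(\widehat\GL_2(\ZZ_5))\cong(\Sym^{k-2}\QQ_5^{\oplus2})^{\calO_D^\times}$, and $\calO_D^\times$-invariant polynomials are in no way concentrated in low degrees; nothing in the explicit matrix (look at the formula for $P_{i,j}$) suggests the first $d^{\unr}$ monomials span a Hecke-stable line. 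Without triangularity the coranks of the upper-left minors are not preserved by your conjugation, and the block form is useless for the theorem. The second gap, the anti-diagonal sparsity $E_{ij}=0$ for $i+j<d^{\new}-1$, is the actual content of the bound and is nowhere derived; ``Atkin--Lehner acts as a palindromic reflection in the adapted basis'' presupposes a basis you have not constructed.

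The paper's proof avoids both problems by never decomposing into old and new parts and never changing basis. It proves the operator identity $i\circ\proj(\varphi)=\gothU_5(\varphi)+\varphi(x\MATRIX 0150^{-1})|_{\MATRIX 0150}$ directly on $S^D_{k,a}(\widehat\Gamma_1(5))$, where $\proj$ is the trace to level $\widehat\GL_2(\ZZ_5)$ and $i$ the degeneracy map. The first term visibly has rank $\le d^{\unr}_{k,a}$ because it factors through $S^D_{k,a}(\widehat\GL_2(\ZZ_5))$. The second term is computed, by the same $D^\times\times\widehat\Gamma_1(5)$-factorization used in Lemma 3.1, to be the slash action of an anti-diagonal matrix $\MATRIX{0}{-2-\nu_5}{-2+\nu_5}{0}$, which sends $z^i$ to a scalar multiple of $z^{k-2-i}$; hence its matrix in the monomial basis is literally anti-diagonal, and its upper-left $n\times n$ minor has rank $\max(0,2n-d^{\Iw}_{k,a})$ by inspection. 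Subadditivity of rank applied to the $n\times n$ minors then gives $\rank P_n(k,a)\le d^{\unr}_{k,a}+\max(0,2n-d^{\Iw}_{k,a})$, which is the claimed corank bound. If you want to salvage your route, you would essentially have to prove this same identity anyway (it is what makes $\gothU_5$ equal to $-\mathrm{AL}$ on the new space), at which point the old/new decomposition and the adapted basis become unnecessary detours.
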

\begin{proof}
	Consider the maps $i:S_{k,a}^D(\widehat \GL_2(\ZZ_5)) \to S_{k,a}^D(\widehat \Gamma_1(5))$ and $proj: S_{k,a}^D(\widehat \Gamma_1(5)) \to S_{k,a}^D(\widehat \GL_2(\ZZ_5))$, given by
	\[  
	\quad i(\varphi)(x) = \varphi(x\MATRIX 5001^{-1})|_{\MATRIX 5001}, \quad \textrm{and} \quad \proj(\varphi)(x) = \varphi(x \MATRIX 0110)|_{\MATRIX 0110}+ \sum_{j=0}^{4} \varphi(x\MATRIX 10j1^{-1})|_{\MATRIX 10j1}
	\]
	
	We note that the map $i$ is the analogue of the map $f(z)\ \mapsto f(pz)$ for modular form. For the formula for proj, the matrices are chosen from a coset of $$\widehat \Gamma_0(5) \backslash \widehat \GL_2(\ZZ_5) = \{\MATRIX 0110 \MATRIX100d, \MATRIX 10j1 \MATRIX100d, \textrm{ for } j=0,\dots,4, d =1,\dots,4\}.$$As $S_{k,a}^D(\widehat \Gamma_1(5))$ is invariant under the action of $\MATRIX100d$, the map proj is well defined.\\ 
	
	We have the key identity on the space $S^D_{k,a}(\widehat\Gamma_1(5))$
	\[
	i \circ proj(\varphi)(x)= \sum_{j=0}^{4} \varphi(x\MATRIX 10j1^{-1}\MATRIX 5001^{-1})|_{\MATRIX 10j1\MATRIX 5001} + \varphi(x \MATRIX 0110)\MATRIX 5001^{-1}|_{\MATRIX 0110\MATRIX 5001} \]\[=\sum_{j=0}^{4} \varphi(x\MATRIX 50{5j}1^{-1})|_{\MATRIX 50{5j}1} + \varphi(x \MATRIX 0150^{-1})|_{\MATRIX 0150} = \mathfrak{U}_5 \big(\varphi(z)\big) + \varphi(x \MATRIX 0150^{-1})|_{\MATRIX 0150}
	\]
	If we write this equality as matrices, it means that 
	\begin{equation}
		\label{U_5 matrix}
		(P_{4i+a,4j+b})_{0 \leq 4i+a,4j+a \leq k-2} = \textrm{matrix for }i \circ \proj \ - \ \textrm{matrix for }\varphi \mapsto \varphi(\MATRIX 0150^{-1})|_{\MATRIX 0150}.
	\end{equation}  
	The matrix for $i \circ \proj$ has rank not greater than $d_{k,a}^\unr$. To see the matrix of the second operation we use the same argument in Lemma \ref{Upmatrices},we write $\MATRIX 0150^{-1} = \MATRIX{0}{-2-\nu_5}{-2+\nu_5}{0}^{-1} \MATRIX{-2-\nu_5}{0}{0}{\frac{-2+\nu_5}5}$. Then 
	$$
	\varphi(x\MATRIX 0150^{-1})|_{\MATRIX 0150} = \varphi(x)|_{\MATRIX{0}{-2-\nu_5}{-2+\nu_5}{0}}
	$$
	In particular, this means that the matrix of the second operator of \eqref{U_5 matrix} on $S_{k}^D(\widehat \Gamma_1(5))$ is the anti-diagonal matrix
	$$
	\begin{small}\begin{pmatrix}
			0 & 0 & \cdots &0 & (-2+\nu_5)^a(-2-\nu_5)^{k-2-a}\\
			0 & 0 & \cdots & (-2+\nu_5)^{a+4}(-2-\nu_5))^{k-6-a}  &0\\
			\vdots & \vdots & & \vdots & \vdots
			\\
			(-2+\nu_5)^{k-2-a}(-2-\nu_5)^a & 0 & \cdots & 0&0
		\end{pmatrix}
	\end{small}.
	$$
	But if we only look at its upper left $n\times n$ submatrix, its rank is
	\begin{equation}
		\label{E:rank of AL}
		\begin{cases}
			0 & \textrm{if } n \leq d_{k,a}^\Iw/2
			\\
			2n-d_{k,a}^\Iw & \textrm{ otherwise}.
	\end{cases}\end{equation}
	Thus, the rank of $P_n(k,a)$ is at most the sum of \eqref{E:rank of AL} and $d_{k,a}^\unr$. We deduce that the corank of $P_n(k,a)$ is at least \eqref{E:rank of n*n minor}.
\end{proof}
\begin{remark}
With the dimension formulas, we can compare the corank with the number \eqref{E:rank of n*n minor}. We observe that they appear to be always the same. But for the purpose of our project, we only need the inequality.  

For example , when $a=0, k_\bullet =5,6,\dots,14$ and $n=3$, we have 

\begin{center}
	\begin{tabular}{|c|c|c|c|c|c|c|c|c|c|c|c|}
		\hline  &
		$k_\bullet$ =    5 & 6 & 7 & 8 & 9 & 10 & 11 & 12&13&14&
		\\ \hline
		Corank of $P_3(4k_\bullet-2,0)$&1&2&1&2&1&1&1&1&0&1&
		\\ \hline
	\end{tabular}
\end{center}
\medskip
\quad  The numbers $\min \{3-d_{k,0}^\unr, d_{k,0}^\Iw- d_{k,0}^\unr-3\}$ in \eqref{E:rank of n*n minor} are:\\
\begin{center}
	\begin{tabular}{|c|c|c|c|c|c|}
		\hline $k_\bullet$&
		$d_{k,0}^\unr$& $d_{k,0}^\Iw$ & $3-d_{k,0}^\unr$ & $d_{k,0}^\Iw - d_{k,0}^\unr-3$& $\min \{3-d_{k,0}^\unr$ $d_{k,0}^\Iw- d_{k,0}^\unr-3\}$
		\\ \hline
		5&1&5&2&1&1
		\\ \hline
		6&1&6&2&2&2
		\\ \hline
		7&2&7&1&2&1
		\\ \hline
		8&1&8&2&4&2
		\\ \hline
		9&2&9&1&4&1
		\\ \hline
		10&2&10&1&5&1
		\\ \hline
		11&2&11&1&6&1
		\\ \hline
		12&2&12&1&7&1
		\\ \hline
		13&3&13&0&7&0
		\\ \hline
		14&2&14&1&9&1
		\\ \hline
	\end{tabular}
\end{center}
\end{remark}
\newpage
\section{Ghost conjecture and its application}
\subsection{Ghost conjecture}
In this section, we will state a version of the ghost conjecture and one useful theorem that can be used in the proof of the ghost conjecture. 

As in \cite{liu-wan-xiao}, we can define the characteristic power series of $U_5$ on $S_{k,a}^{D,\dagger}(\widehat\Gamma_1(5))$ as $$\textrm{Char}(U_5,k,a) =  \textrm{det}(I-X P(k,a)).$$ 

Our goal is to compute the 5-adic valuation of the $U_5$-eigenvalue, namely \emph{the slope}, by finding the Newton polygon of the power series $\textrm{Char}(U_5,k,a)$. The ghost conjecture states that there is an explicitly defined power series that has the same Newton polygon.\\ 

Set $w_k = \exp(5(k-2))-1$. We can find power series in $w$ such that when evaluating at $w=w_k$, we get the entries of the matrix. Note that the power series coefficients are not in $\ZZ_5$  but as in \cite{liu-wan-xiao}, there is a power series  $\textrm{Char}(U_5,w,t) \in \ZZ_5 [[w]][[t]]$ such that  ${Char}(U_5,w_k,a)={Char}(U_5,k,a)$ for all weights $k$.

Following \cite{bergdall-pollack}, we define  the \emph{ghost series} for each $a = 0,\dots,3$  to be the formal power series
\begin{equation}
	\label{ghost series}
	G^{\left(a\right)}(w,t)=  1+\sum_{n=1}^\infty
	g^{\left(a\right)}_n(w)t^n\in \ZZ_5 [w][[t]],
\end{equation}
where each coefficient $g^{\left(a\right)}_n(w)$ is a product
\begin{equation}
	\nonumber
	g^{\left(a\right)}_n(w) = \prod_{\substack{l \geq 2\\ l \equiv 2a-2\bmod 4 }} (w - w_\ell)^{m^{\left(a\right)}_n(\ell)} \in \ZZ_5[w]
\end{equation}
with exponents $m^{\left(a\right)}_n(\ell)$ given by the following recipe
\[
m^{\left(a\right)}_n(\ell) = \begin{cases}
	\min\big\{ n - d_{\ell,a}^\unr , d_{\ell,a}^\Iw- d_{\ell,a}^\unr - n\big\} & \textrm{ if }d_{\ell,a}^\unr < n < d_{\ell,a}^\Iw - d_{\ell,a}^\unr
	\\
	0 & \textrm{ otherwise.}
\end{cases}
\] 

Note that the exponents $m^{\left(a\right)}_n(l)$ has the same pattern as the corank of $P_n(k,a)$. We expect that the ghost conjecture is correct in this case.
\begin{conjecture}
	\title{(Ghost conjecture)}
	$\textrm{Char}(U_5,w,a)$ and $G^{(a)}(w,t)$ have the same Newton polygon for every $w \in \mathfrak{m}_{\CC_5}$. 
\end{conjecture}
There are more technical difficulties to prove this conjecture. However, we believe the following theorem is one among many ingredients. 

\begin{theorem}
	The determinant of the upper left $n\times n$ minor $P_n(k,a)$ is divisible by $g^{\left(a\right)}_i(w_k)$.
\end{theorem}
\begin{proof} 
	As noted above, each coefficients of the matrix is a power series in $w$. Then we can find a power series $f_{a,n}(w) \in \ZZ_5[[w]]$ such that $f_{a,n}(w_\ell)= \textrm{det}P_n(\ell,a)$. By the conrank bound in \ref{E:rank of n*n minor}, $f_n(w,a)$ is divisible by  $(w - w_\ell)^{m^{\left(a\right)}_n(\ell)}$, so is $g^{\left(a\right)}_i(w_k)$.
\end{proof}
\subsection{Gouv\^ea conjecture}
In this section we show that the first $d_{k_0,a}^\unr$-slopes of the Newton polygon of the ghost series is less than $\frac{k+4}{6}$, this is slightly larger than the  Gouv\^ea's $\frac{k-1}{p+1}$ conjecture. Note that we shall suppress $a$ from the notation when it is not necessary.

First we define some useful terms. 

\begin{lemma}
	\label{L:extremal ks}
	For $k=4k_{\bullet}+ 2a -2 $, fix $n\in \ZZ_{ \geq 0}$.
	
	\begin{enumerate}
		\item There is a unique integer $k_\bullet$ such that  $n=\frac 12 d_{k+4,a}^\Iw$; it is
		$$k_\bullet = k_{\midd\bullet}(n): = 2 n+1.$$
		
		\item We denote the largest $k_\bullet \in \ZZ_{\geq 0}$ satisfies $n = d_{k,a}^\unr$ as $k_{{\max}\bullet}$ and it is:   $$k_{{\max}\bullet}(n)=6n+a+2$$
		
		\item 
		There exists $k_\bullet \in \ZZ_{\geq 0}$ such that  $n \geq d_{k,a}^\Iw-d_{k,a}^\unr$ for any $k < k_\bullet$. We denote it as $k_{{\min}\bullet}$  
		
		If we write $$\tilde k_{{\min}\bullet}(n)= 6n+4-a+5\delta_{n,a,5},$$ then $k_{{\min}\bullet}(n) = \lceil \tilde k_{{\min}\bullet} (n)/ 5\rceil.$
	\end{enumerate}
	
\end{lemma}

\begin{proof}
	(1) This follow from the dimension formula in Theorem \ref{dim_iw} $d_{k,a}^\Iw = k_\bullet.$\\
	(2) This follow from the formula in Theorem \ref{dim_unr} 	$d_{k,a}^\unr = \lfloor \frac{k_\bullet-a+5}{6}\rfloor  -\delta_{k_\bullet,a+2,6}$.\\ 
	(3)	We see that $$d_{k,a}^\Iw-d_{k,a}^\unr=k_\bullet-\lfloor \frac{k_\bullet-a+5}{6}\rfloor -\delta_{k_\bullet,a+2,6}=k_\bullet- \lfloor\frac{k_\bullet-a+5-6\delta_{k_\bullet,a+2,6}}{6}\rfloor$$
	Hence,the equality $n \geq d_{k,a}^\Iw-d_{k,a}^\unr$ is equivalent to 
	
	$$ 6(k_\bullet-n) \leq k_\bullet-a+5-6\delta_{k_\bullet,a+2,6}$$
	
	We consider two cases. 
	
	\begin{itemize}
		\item Case 1: If $k_\bullet \not \equiv a+2 \bmod 6$, then $\delta_{k_\bullet,a+2,6} =0$. We have $$5k_\bullet \leq 6n-a+5$$
		
		If  $n \not\equiv a\bmod 5$, then   $6n-a+5$ is also not divisible by 5. Since $5k_\bullet \not \equiv -a+4 \bmod6$, we conclude that $ 5k_\bullet < 6n-a+4.$
		
		If $n \equiv a\bmod 5$, then $k_\bullet \leq \frac{6n-a+5}5$
		\item Case 2: If $k_\bullet \equiv a+2 \bmod 6$, then $\delta_{k_\bullet,a+2,6} =1$. In this case, $ 5k_\bullet \leq 6n-a-1.$
	\end{itemize}
	
	In both case, we set $$\tilde k_{{\min}\bullet}(n)= 6n+4-a+5\delta_{n,a,5},$$ then we conclude that for all $k_\bullet < \lceil \frac{k_{{\min}\bullet}(n)}5\rceil$, $n \geq d_{k,a}^\Iw-d_{k,a}^\unr$
\end{proof}

\begin{remark}
	\label{R:meaning of kmidminmax} 
	\begin{itemize}	From Lemma \ref{L:extremal ks} and the dimension formula in Theorem \ref{dim_iw} and Theorem \ref{dim_unr}, we see that
		\item $\frac 12d_{k,a}^\Iw = n \Leftrightarrow k_\bullet = k_{\mathrm{mid}\bullet}(n)$.
		\item $d_k^\unr(\varepsilon_1) \leq  n \Leftrightarrow k_\bullet \leq k_{\mathrm{max}\bullet}(n)$ and $k \not =  k_{\mathrm{max}\bullet}(n)-1$
		\item 
		$d_{k,a}^\Iw - d_{k,a}^\unr\leq  n \Leftrightarrow k_\bullet < k_{\mathrm{min}\bullet}(n)$ and $d_{k,a}^\Iw -d_{k,a}^\unr \geq   n \Leftrightarrow k_\bullet \geq k_{\mathrm{min}\bullet}(n-1)$.
	\end{itemize}
\end{remark}	
Now we can computes the difference between the exponent $m_n(\ell)$.

\begin{lemma}\label{ghostpower} For $k=4k_{\bullet}+ 2a -2 $, we have 
	\[m_{n+1}(k)-m_n(k) = \begin{cases}
		-1 & \textrm{ if } k_{\min\bullet}(n)\leq k_\bullet<k_{\midd\bullet}(n)
		\\
		0 & \textrm{ if } k_\bullet = k_{\min\bullet}(n) \textrm{ or } k_{\max\bullet}(n)-1
		\\
		1 & \textrm {if } k_{\min\bullet}(n)<k_\bullet<k_{\midd\bullet}(n) \textrm{ and  } k \not = k_{\max\bullet}(n)-1.
	\end{cases}
	\]
\end{lemma}

\begin{proof}
	\begin{itemize}
		For a fixed integer $n$ and consider $k$ satisfies $d_{k,a}^\unr<n<d_{k,a}^\Iw- d_{k,a}^\unr$, we have
		\[\min\big\{ n - d_{k,a}^\unr , d_{k,a}^\Iw- d_{k,a}^\unr - n\big\} =
		\begin{cases}
			n - d_{k,a}^\unr & \textrm{ if } n \leq \frac{d_{k,a}^\Iw}2
			\\
			d_{k,a}^\Iw- d_{k,a}^\unr - n & \textrm {otherwise.}
		\end{cases}\]
		From Remark \ref{R:meaning of kmidminmax}, we list the value of $m_{n+1}(k)$ and $m_n(k)$ for various $k$'s in the following table
		\begin{center}
			\begin{tabular}{|c|c|c|}
				\hline $k_\bullet$ &
				$m_{n+1}(k)$ & $m_{n}(k)$  
				\\ \hline
				$k_{\min\bullet}(n)\leq k_\bullet<k_{\midd\bullet}(n)$&$ d_{k,a}^\Iw- d_{k,a}^\unr - (n+1)$&$ d_{k,a}^\Iw- d_{k,a}^\unr - n$
				\\ \hline
				$k_\bullet = k_{\midd\bullet}(n)$&$d_{k,a}^\Iw- d_{k,a}^\unr - (n+1)$&$n-d_{k,a}^\unr$
				\\ \hline
				$k_\bullet = k_{\max\bullet}(n)-1$&$0$&$0$
				\\ \hline
				$k_{\midd\bullet}(n)< k_\bullet \leq k_{\max\bullet}(n),k_\bullet \not = k_{\max\bullet}(n)-1 $&$n+1-d_{k,a}^\unr$&$n-d_{k,a}^\unr$
				\\ \hline
				
			\end{tabular}
		\end{center}		
		
		The first case and the last case follows directly from the remark. We explain the other two cases. \\
		
		When $k_\bullet = k_{\midd\bullet}(n)$, from the dimension formula in Theorem \ref{dim_iw}, we have $n =\frac{d_{k+4,a}^\Iw}2=k_\bullet+1.$ Hence,
		$m^{(a)}_{n+1}(k) - m^{(a)}_{n}(k)= d_{k,a}^\Iw - 2n -1  =0$\\
		
		When  $k_\bullet = k_{\max\bullet}(n)-1$, we have $k_\bullet=6n+a+1$. By the formula in Theorem \ref{dim_unr}, we have $d_{k,a}^\unr=n+1$ so $m^{(a)}_{n+1} = 0,$ and since $d_{k,a}^\unr>n$,$m^{(a)}_{n} = 0.$
	\end{itemize}
\end{proof}
We also need the following useful formula. 

For a positive integer $m$, we denote $\Dig(m)$ as the sum of all digits in the $p$-adic expansion of $m$. For a real number $\alpha$, the number $\lceil \alpha \rceil$ is  the smallest integer that is not smaller than $\alpha$.
\begin{lemma}
	\label{E:sum of consecutive valuations}
	The sums of valuations of continuous integers in $(m_1, m_2]$ with $m_2 > m_1>0$ is $$\sum_{m_1< i \leq m_2} v_p(i) = \frac{(m_2-\Dig(m_2) )-(m_1 -\Dig(m_1))} {p-1}.$$
\end{lemma}
\begin{proof}
	We use the well-known formula $v_p(n!)= \frac{n-\Dig(n)}{p-1}$ to conclude that $$\sum_{m_1< i \leq m_2} v_p(i) = v_p(m_2!)-v_p(m_1!)= \frac{m_2-\Dig(m_2)}{p-1}-\frac{m_1 -\Dig(m_1)} {p-1}.$$
\end{proof}
\begin{lemma}
	\label{E:digitgame} 
	Let n,m be positive integers such that $pn > m$.  
	$$p\lceil \frac{m}{p} \rceil + \Dig(n - \lceil \frac{m}{p} \rceil) =  m + \Dig(pn -  m ).$$
\end{lemma}
\begin{proof}
	Write $m = p \lceil \frac{m}{p} \rceil - r$ where $r=0,\dots,p-1$. 
	We have $$   \Dig(n - \lceil \frac{m}{p} \rceil) =  \Dig(pn - p\lceil \frac{m}{p} \rceil) =\Dig(pn-m-r).$$
	Since $pn-m-r$ is divisible by $p$, the number $pn-m$ ends with digit $r$ in its $p$-based expression. Hence,  $$ \Dig(pn-m-r)= \Dig(pn-m)-r.$$	
\end{proof}
Now we are ready to find an upper bound for the slope. 
\begin{theorem}
	\label{P:gouvea k-1/p+1 conjecture}
	For an integer $k_0 \equiv 2a-2 \bmod{4}$, the first $d_{k_0,a}^\unr$-slopes of the Newton polygon of $G^{(a)}(w_{k_0},t))$ are all less than or equal to $\frac{k_0+4}{6}$. More precisely, they are less than or equal to the following.
	\begin{equation}
		\label{E:gouvea maximal slope}
		4(d_{k_0,a}^\unr-1+\delta_{k_{0\bullet},a+2,6})+a+1
	\end{equation}
\end{theorem}
\begin{proof}
	We first check that the number in $\eqref{E:gouvea maximal slope}$ is less than $\frac{k_0+4}{6}$.
	
	From Theorem \ref{dim_unr}, we have:
	$$4(d_{k_0,a}^\unr-1+\delta_{k_{0\bullet},a+2,6})+a+1= 4\lfloor\frac{k_{0\bullet}-a-1}{6}\rfloor+a +1\leq \frac{4k_{0\bullet}+2a+2}{6}=\frac{k_0+4}{6}$$
	
	Now, we prove that the first $d_{k_0}^\unr$-slopes of $\NP(G(w_{k_0}, -))$ are all less than or equal to \eqref{E:gouvea maximal slope}. It suffices to show that for $i = 1, \dots, d_{k_0}^\unr$, we have
	$$
	v_5(g_{d_{k_0}^\unr}(w_{k_0})) - v_5(g_{d_{k_0}^\unr-i}(w_{k_0})) \leq i\cdot \left(4(d_{k_0,a}^\unr-1+\delta_{k_{0\bullet},a+2,6})+a+1\right).
	$$
	We separate three cases $k_0 \equiv a+1 \bmod 6$, $k_0 \equiv a+2 \bmod 6$ and $k_0 \not \equiv a+1, a+2 \bmod 6$
	
	First, let consider the case $k_0 \not \equiv a+1, a+2 \bmod 6$. In this case the upper bound is  $4(d_{k_0,a}^\unr-1)+a+1$.
	
	For integers $n = d_{k_0}^\unr -1,\dots,1, 0$,
	we can check that $k_{0\bullet} > k_{{\max}\bullet}(n).$ So we have 
	
	\begin{eqnarray}
		\label{ghostvertex_diff}
		\nonumber
		& v_5(g_{n+1}(w_{k_0})) - v_5(g_n(w_{k_0})) \ = \hspace{-15pt}\displaystyle\sum_{\substack{
				k_\bullet \neq k_{\max\bullet(n)}-1
				\\
				k_{\midd\bullet}(n) < k_\bullet \leq k_{{\max}\bullet}(n)}}	
		\hspace{-20pt}\big( v_5(k_{0\bullet} - k_\bullet)+1 \big)\ \,-  \hspace{-25pt} \displaystyle\sum_{
			k_{{\min}\bullet}(n) \leq k_\bullet < k_{{\midd}\bullet}(n)}
		\hspace{-20pt}\big( v_5(k_{0\bullet}-k_\bullet)+1 \big)
		\\
		\nonumber
		&\qquad\qquad 
		\leq \dfrac{5(k_{{\max}\bullet}(n) - k_{\midd\bullet}(n))-4 + \Dig(k_{0\bullet} - k_{{\max}\bullet}(n)-1) - \Dig (k_{0\bullet}-k_{\midd\bullet}(n)-1)}{4}
		\\
		\nonumber
		& \qquad\qquad\qquad\quad -\ \dfrac{5(k_{{\midd}\bullet}(n) - k_{{\min}\bullet}(n))+ \Dig ( k_{0\bullet}- k_{\midd\bullet}(n)) - \Dig(k_{0\bullet} - k_{{\min}\bullet}(n)) 
		}{4}.
	\end{eqnarray}
	\quad Note that in the inequality, we add an extra term $v_5\left(k_\bullet - k_{\max\bullet(n)}+1\right)$ in order to use the following Lemma \ref{E:sum of consecutive valuations}. We will subtract it when it is necessary later. \\
	Using Lemma \ref{E:digitgame} and Lemma \ref{L:extremal ks}, one can deduce that
	\begin{equation}
		\label{E:digital expansion game}
		\nonumber
		5k_{{\min}\bullet}(n)+ \Dig(k_{0\bullet} - k_{{\min}\bullet}(n)) = \tilde k_{{\min}\bullet}(n)+ \Dig(5k_{0\bullet} - \tilde k_{{\min}\bullet}(n)).
	\end{equation}
	Hence, the expression is equal to \begin{align*}
		&\frac{5k_{{\max}\bullet}(n)+\tilde k_{{\min}\bullet}(n) - 10k_{{\midd}\bullet}(n)-4}{4} + \frac{\Dig(k_{0\bullet}- k_{{\max}\bullet}(n)-1) + \Dig(k_{0\bullet}  - \tilde k_{{\min}\bullet}(n))}{4}
		\\
		& \quad - \frac{\Dig(k_{0\bullet}- k_{{\midd}\bullet}(n)-1)+\Dig(k_{0\bullet}- k_{{\midd}\bullet}(n))}{4}
	\end{align*}
	We divide the sum into two parts: $$D_1(n)=\frac{5k_{{\max}\bullet}(n)+\tilde k_{{\min}\bullet}(n) - 10k_{{\midd}\bullet}(n)-4}{4}$$ and $$D_2(n)=\frac{\Dig(k_{0\bullet}  - \tilde k_{{\min}\bullet}(n)) - \Dig(k_{0\bullet}- k_{{\midd}\bullet}(n)-1)+\Dig(k_{0\bullet}- k_{{\midd}\bullet}(n))}{4}.$$
	From Lemma ~\ref{L:extremal ks}, we have
	\begin{equation}\label{D1}
		D_1(n)= \frac{5(6n+a+2)+(6n+4-a+\delta_{n,a,5})-10(2n+1)-4}{4}=4n+a+\frac{\delta_{n,a,5}}{4}
	\end{equation}
	Now we simplifying the second term $D_2(n)$. \\
	We have $$\Dig\left(5k_{0\bullet} - \tilde k_{{\min}\bullet}(n)\right) = \Dig\left(5k_{0\bullet}-(6n+4-a+\delta_{n,a,5})\right)$$
	If $n \equiv a \bmod 5$ then it is equal to $	\Dig\left(5k_{0\bullet}-6n-9+a\right).$We observe that $5k_{0\bullet}-6n-4+a$ end with digit 1 when written in 5-base.
	Hence, $$\Dig\left(5k_{0\bullet}-6n-9+a\right) = \Dig\left(5k_{0\bullet}-(6n+4-a)-2\right)-3$$   
	
	If $n \not \equiv a \bmod 5$, we have  $$\Dig\left(5k_{0\bullet}-(6n+4-a+\delta_{n,a,5})\right)=\Dig(5k_{0\bullet}-6n-4+a) \leq \Dig\left(5k_{0\bullet}-6n-6+a\right)+2 $$
	
	Hence, $$\Dig\left(5k_{0\bullet} - \tilde k_{{\min}\bullet}(n)\right)\leq \Dig\left(5k_{0\bullet}-6n-6+a\right)+(2-\delta_{n,a,5})$$
	We write	$$A_n = k_{0\bullet}  - k_{{\max}\bullet}(n)-1= k_{0\bullet}-6n-a-3,$$ 
	$$B_n = 5k_{0\bullet}-6n-6+a,$$
	$$C_n = k_{0\bullet}- k_{{\midd}\bullet}(n)= k_{0\bullet}-2n-2.$$
	
	From the argument above, we conclude
	$$D_2(n) \leq\frac{\Dig\left(A_n\right)+\Dig\left(B_n\right)-\Dig\left(C_n\right)-\Dig\left(C_n+1\right)+(2-\delta_{n,a,5})}{4}$$
	
	Combine with \eqref{D1}, we have 
	$$D_1(n)+D_2(n) \leq 4n+a+\frac 12 +  \frac{\Dig\left(A_n\right)+\Dig\left(B_n\right)-\Dig\left(C_n\right)-\Dig\left(C_n+1\right)}{4}$$
	
	Thus, we have:
	\begin{eqnarray}
		\label{F}
		&v_p(g_{d_{k_0}^\unr}(w_{k_0})) - v_p(g_{d_{k_0}^\unr-i}(w_{k_0})) -  i \cdot \big(4(d_{k_0,a}^\unr-1)+a+1\big)  \\
		\nonumber &
		\leq i\left(\frac{3}{2}-2i\right)+\displaystyle \sum_{1 \leq j \leq i}   \frac{\Dig\left(A_n\right)+\Dig\left(B_n\right)-\Dig\left(C_n\right)-\Dig\left(C_n+1\right)}{4}
	\end{eqnarray}
	
	We find an upper bound for the last term. 
	\begin{lemma}
		\label{ABC}
		$\displaystyle \sum_{1 \leq j \leq i} \Dig\left(A_n\right)+\Dig\left(B_n\right)-\Dig\left(C_n\right)-\Dig\left(C_n+1\right) \leq 4i^2+2i$
	\end{lemma}
	We have the following relations:
	$$B_n = 6 C_n - A_n+3$$. 
	
	For $j=1,2,...,d^\unr_{k_0}$, we write $A_{d_{k_0}^\unr - j},B_{d_{k_0}^\unr - j},C_{d_{k_0}^\unr - j}$ in terms of $A= A_{d_{k_0}^\unr - 1}$,and $C= C_{d_{k_0}^\unr - 1}$.
	
	Note that 	$A_{d_{k_0}^\unr-1}= k_{0\bullet}-6(d_{k_0}^\unr-1)-a-3=k_{0\bullet}-a-1-\lfloor \frac{k_{0\bullet}-a-1}6\rfloor-1 \in \{0,1,2,3\}.$ 
	
	\begin{equation}
		\label{E:A_n and C_n}
		\nonumber
		A_{d_{k_0}^\unr - j} = A+ 6(j-1) 
		\textrm {    and  }C_{d_{k_0}^\unr - j} = C+2(j-1)
	\end{equation}
	
	\begin{equation}
		\nonumber
		\textrm{ Hence, } B_{d_{k_0}^\unr - j} = 6(C+j-1)+3-A
	\end{equation}
	Then using the inequality $\textrm{Dig}(A+B)\leq\textrm{Dig}(A) + \textrm{Dig}(B)  $ and $\textrm{Dig}(6k)\leq 2\textrm{Dig}(k) \leq 2k$, we conclude that:\\
	\begin{equation}
		\nonumber
		\textrm{Dig}\big(A_{d_{k_0}^\unr - j}\big) \leq \Dig(A)+2(j-1)= A+2(j-1)
	\end{equation}
	
	\begin{equation}
		\label{Eq1}
		\nonumber
		\textrm{Dig}\big(B_{d_{k_0}^\unr - j}\big) \leq \Dig\left(6(C+j-1)\right)+\Dig(3-A)\leq \Dig\left(6(C+j-1)\right)+3-A
	\end{equation}  
	
	Adding these inequalities:
	
	\begin{align*}
		\displaystyle \sum_{1 \leq j \leq i}{\big( \textrm{Dig} A_{d_{k_0}^\unr - j} +\textrm{Dig}B_{d_{k_0}^\unr-j}}\big) \leq i^2+ 2i+\displaystyle \sum_{1 \leq j \leq i}{\textrm{Dig} \big(6(C+j-1)\big)}.
	\end{align*}
	
	We need the following lemma
	
	\begin{lemma}
		\label{Diglemma}
		For all $k,C \in \ZZ_{+}$
		$$ \textrm{Dig}\big((p+1)C \big) \leq \textrm{Dig}(C)+ \textrm{Dig}(C+k)+k(p-2) $$ 
	\end{lemma}
	\begin{proof}
		We prove by induction on the number of the digit of $C$ written in $p$-base.\\ 
		If C has only one digit, we have $$\textrm{Dig}(C)+ \textrm{Dig}(C+k)+k(p-2) \geq \textrm{Dig}(C)+p-1 \geq  2 \textrm{Dig}(C) = 2 C = \textrm{Dig}\big((p+1)C\big).$$ 
		Now we assume that the inequality is true for any $C$ with at most $n-1$ digits, and prove for the case C has $n$ digits.

		If $k$ has at least $n$ digits then $k(p-2) \geq p^{n-1}(p-2) \geq (n-1)p(p-2) \geq (n+1)(p-1)$. The last inequality holds for $n \geq 2$, and $p \geq 5$. We observe that either (p+1)C has $n+1$ digits or it has (n+2) digits and start with 10, so in both case the sum of its digit is at most $(n+1)(p-1)$. The lemma follows. \\
		Now we assume $k$ has $r\leq n-1$ digits and consider 2 cases. \\
		\emph{Case 1:} If the first $n-r$ digits of C are all equal $p-1$, we write $C=p^r D + E$ where $0 \leq E < p^r.$ \\
		If $E+k < p^r$, then $\textrm{Dig}(C+k)= \textrm{Dig}(D)+\textrm{Dig}(E+k)$. By the inductive hypothesis on E, we deduce 
		\begin{eqnarray}
			\nonumber
			& \textrm{Dig}(C) + \textrm{Dig}(C+k) + k(p-2)= 2 \textrm{Dig}(D)+\textrm{Dig}(E)+ \textrm{Dig}(E+k)+k(p-2) \geq \\
			\nonumber & \textrm{Dig}\big((p+1)D\big)+ \textrm{Dig}\big((p+1)E\big) \geq 
			\textrm{Dig}\big((p+1)C \big)
		\end{eqnarray}
		If $E+k\geq p^r$, we can compute $\textrm{Dig}(C+k)= \textrm{Dig}(E+k), \textrm{Dig}(C)= (n-r)(p-1)+\textrm{Dig}(E)$ and $\textrm{Dig}\big((p+1)C\big) \leq (n-r)(p-1)+\textrm{Dig}\big((p+1)E\big)$. The lemma follows from the inductive hypothesis on E. 
		
		\emph{Case 2:} $C$ has a digit other than $p-1$ in the first $n-r$ digit, let say it happen at $p^{r'}$. We write $C = p^{r'-1} D+E$, where  $0 \leq E < p^{r'-1}$.  In this case either $E+k \geq p^{r'}$ or $E+k < p^{r'}$ , we have $\textrm{Dig}(C+k) = \textrm{Dig}(D)+ \textrm{Dig}(E+k)$. This is the same situation as in Case 1. 
		
	\end{proof}
	Applying the lemma for $C,C+1,...,C +i-1$, $k=i$ and $p=5$, then summing up the inequalities, we deduce
	
	$$\displaystyle \sum_{1 \leq j \leq i}{\Dig\left(6(C+j)\right)} \leq \displaystyle \sum_{1 \leq j \leq 2i}{\textrm{Dig}\big((C+j-1)\big)}+3i^2).$$
	
	Hence,
	
	\begin{equation}
		\label{bound1}
		\displaystyle \sum_{1 \leq j \leq i} 
		\textrm{Dig} \left( A_{d_{k_0}^\unr - j} \right) +\textrm{Dig} \left( B_{d_{k_0}^\unr - j}\right) - 2 \textrm{Dig} \left(C_{d_{k_0}^\unr - j} \leq 4i^2+2i \right)	\end{equation}
	
	Thus, the inequality \ref{F} becomes 
	
	$$	v_5(g_{d_{k_0}^\unr}(w_{k_0})) - v_p(g_{d_{k_0}^\unr-i}(w_{k_0})) -  i \cdot \big(4(d_{k_0,a}^\unr-1)+a\big) \leq i(2-i).$$
	
	We only need to check for the case $i=1$.
	
	For $i=1$, using the notation in Lemma \ref{ABC}
	\begin{eqnarray} 
		\nonumber
		&v_5(g_{d_{k_0}^\unr}(w_{k_0})) - v_5(g_{d_{k_0}^\unr-1}(w_{k_0})) -  \big(4(d_{k_0}^\unr-1)+a+1\big) 
		\\
		\nonumber
		& \leq -\frac 12 + \frac{\Dig(A)+\Dig(6C+3-A)-\Dig(C)-\Dig(C+1)}4
	\end{eqnarray}
	\begin{itemize}
		\item If $A = 0,1, \textrm{ or }2$, then we have  $$\Dig(6C+3-A)\leq  \Dig(5C)+\Dig(C+1)+\Dig(2-A)= 2-A+\Dig(C)+\Dig(C+1)$$
		\item	If $A=3$, recall that in \ref{ghostvertex_diff}, we put the extra term $$v_5\left(k_\bullet - k_{\max(n)\bullet}+1\right)=v_5(A+2)=1.$$
		
	\end{itemize}

	So in both cases, $v_5(g_{d_{k_0}^\unr}(w_{k_0})) - v_5(g_{d_{k_0}^\unr-1}(w_{k_0})) -  \big(4(d_{k_0}^\unr-1)+a+1\big) \leq 0.$
	
	We finish the proof for the case $k_0 \not \equiv a+1, a+2 \bmod 6$.
	
	Let us consider $k_{0\bullet}\equiv a+1 \bmod 6$. In this case $k_{0\bullet}> k_{\max(n)\bullet}$ for  $n = d_{k_0}^\unr -2,\dots,1, 0$,and  $k_{0\bullet}= k_{\max(n)\bullet}-1$  when $n = d_{k_0}^\unr-1.$
	So the computation still works for the former. We just need to modify the proof for $n = d_{k_0}^\unr-1$. In this case we can compute 
	\begin{eqnarray}
		\label{a+1_upperbound}
		& v_5(g_{n+1}(w_{k_0})) - v_5(g_n(w_{k_0})) \ = \hspace{-15pt}\displaystyle\sum_{\substack{
				k_\bullet \neq k_{\max(n)\bullet}-1
				\\
				\nonumber
				k_{\midd\bullet}(n) < k_\bullet \leq k_{{\max}\bullet}(n)}}	
		\hspace{-20pt}\big( v_5(k_{0\bullet} - k_\bullet)+1 \big)\ \,-  \hspace{-25pt} \displaystyle\sum_{
			k_{{\min}\bullet}(n) \leq k_\bullet < k_{{\midd}\bullet}(n)}
		\hspace{-20pt}\big( v_5(k_{0\bullet}-k_\bullet)+1 \big)
		\\
		\nonumber
		&\qquad\qquad 
		= \dfrac{5(k_{{\max}\bullet}(n) - k_{\midd\bullet}(n))-6  - \Dig (k_{0\bullet}-k_{\midd\bullet}(n)-1)}{4}
		\\
		\nonumber
		& \qquad\qquad\quad -\ \dfrac{5(k_{{\midd}\bullet}(n) - k_{{\min}\bullet}(n))+ \Dig ( k_{0\bullet}- k_{\midd\bullet}(n)) - \Dig(k_{0\bullet} - k_{{\min}\bullet}(n)) 
		}{4}.
	\end{eqnarray}
	If we define  $\Dig(k_{0\bullet}  - k_{{\max}\bullet}(n)-1)= \Dig(-2) = \ -2$, we have the upper bound in \ref{a+1_upperbound} is the same as in \ref{ghostvertex_diff}. Hence, we can apply the exact argument for $A_{d_{k_0}^\unr-1}=-2 $ as in the previous case where $A_{d_{k_0}^\unr-1}=0,1,2,3.$
	
	We left with the case $k_{0\bullet}\equiv a+1 \bmod 6$. In this case we have 
	$$\Dig\left(A_{d_{k_0}^\unr - j}\right)=\Dig\left(5 + 6(j-1)\right)\leq \Dig(5)+\Dig(6(j-1)) \leq 2j-1$$
	
	Let $C= C_{d_{k_0}^\unr}$, then we can write  $C_{d_{k_0}^\unr - j}= C+2j.$
	
	Thus, 
	$$\Dig\left(B_{d_{k_0}^\unr - j}\right)=\Dig\left(6C_{d_{k_0}^\unr -j}- A_{d_{k_0}^\unr - j}+3 \right) \leq \Dig(6(C+j))+4$$
	
	We have $$\Dig \left( A_{d_{k_0}^\unr - j}\right)+\Dig \left( B_{d_{k_0}^\unr - j} \right)\leq 2j+3 + \Dig\left(6(C+j)\right)$$
	
	The bound in Lemma \ref{ABC} in this case is $4i^2+4i$.
	
	So 	\begin{eqnarray}
		&v_p(g_{d_{k_0}^\unr}(w_{k_0})) - v_p(g_{d_{k_0}^\unr-i}(w_{k_0})) -  i \cdot \big(4d_{k_0,a}^\unr+a+1\big)  \\
		\nonumber &
		\leq i\left(-\frac{5}{2}-2i\right)+\displaystyle \sum_{1 \leq j \leq i}   \frac{\Dig\left(A_n\right)+\Dig\left(B_n\right)-\Dig\left(C_n\right)-\Dig\left(C_n+1\right)}{4} \leq i(-\frac{3}{2}-i) <0.
	\end{eqnarray}
	This completes the proof of Theorem \ref{P:gouvea k-1/p+1 conjecture}.
\end{proof}
\newpage
	

\begin{thebibliography}{9999}
		
		
		
	
	\bibitem[BP$16$]{bergdall-pollack}
	J. Bergdall and R. Pollack,
	Arithmetic properties of Fredholm series for $p$-adic modular forms, {\it Proc. Lond. Math. Soc.} {\bf 113} (2016), 419--444.
	
	\bibitem[BP$19a$]{bergdall-pollack2}
	J. Bergdall and R. Pollack, Slopes of modular forms and the ghost
	conjecture, {\it IMRN} (2019), no. 4, 1125--1144..
	
	
	\bibitem[BP$19b$]{bergdall-pollack3}
	J. Bergdall and R. Pollack, Slopes of modular forms and the ghost conjecture II, {\it Transactions of the AMS} {\bf 372} (2019),no. 1, 357--388..
	
	
	
	\bibitem[Br]{breuil}
	C. Breuil, notes for his lecture at Columbia university, available on his webpage.
	https://www.imo.universite-paris-saclay.fr/~breuil/PUBLICATIONS/New-York.pdf
	
	\bibitem[BC04]{buzzard-calegari}
	K. Buzzard and F. Calegari,
	A counterexample to the Gouv\^ea--Mazur conjecture, {\it Comptes Rendus Math.} {\bf 338} (2004), 751--753.
	
	
	\bibitem[BK05]{buzzard-kilford}
	K. Buzzard and L. Kilford,
	The 2-adic eigencurve at the boundary of weight space,
	{\it Compos. Math.} {\bf 141} (2005), no. 3, 605--619.
	
	\bibitem[CM98]{Coleman-Mazur} 
	R. Coleman and B. Mazur, The eigencurve, in {\it Galois representations in arithmetic algebraic geometry (Durham, 1996)}, 1–113, {\it London Math. Soc. Lecture Note Ser.}, 254, Cambridge Univ. Press,
	Cambridge, 1998.
	
	
	\bibitem[Go92]{gouvea0}
	F. Gouv\^ea and B.Mazur, Families of modular eigenforms, {\it Math. Comp.}, {\bf 16} (1992),793-805.
	
	\bibitem[Go01]{gouvea}
	F. Gouv\^ea, Where the slopes are, {\it J. Ramanujan Math. Soc.}, {\bf 16} (2001), 75--99.
	
	\bibitem[Ja04]{jacobs}
	D. Jacobs, Slopes of Compact Hecke Operators, thesis, University of London, Imperial College, 2004. 
	\bibitem[Kis09]{kisin}
	M. Kisin, The Fontaine-Mazur conjecture for GL2, {\it J. Amer. Math. Soc.}, {\bf 22} (2009), no. 3, 641–690
	\bibitem[LWX17]{liu-wan-xiao}
	R. Liu, D. Wan and L. Xiao, Eigencurve over the boundary of the weight space. {\it Duke Mathematical Journal} {\bf166} (2017), 1739-1787. 
	
	\bibitem[Pa04]{paskunas}
	V.~Paskunas, Coefficient systems and supersingular representations
	of $\rm{GL}_2(F)$, \textit{M\'{e}m. Soc. Math. Fr. (N.S.)} , no. {\bf99},
	(2004) vi+84 pp. .
	
	\bibitem[Re10]{Reduzzi}
	D. ~Reduzzi, Reduction mod $p$ of cuspidal representations of
	$\GL_2(\FF_{p^n})$ and symmetric powers, \textit{J. Algebra}
	\textbf{324} (2010), no. {\bf12}, 3507--3531.
	
	
	\bibitem[WXZ17]{wan-xiao-zhang}
	D. Wan, L. Xiao, and J. Zhang, Slope of eigencurves over boundary disks, 
	{\it Mathematische Annalen} {\bf369} (2017), 487-537.
	
	\end{thebibliography}
\end{document}